
\documentclass[11pt]{article}

\usepackage{amsthm,amsfonts,amssymb,amsmath,color}

\hyphenpenalty=5000
\tolerance=1000

\numberwithin{equation}{section}

\renewcommand\d{\partial}
\renewcommand\a{\alpha}
\renewcommand\b{\beta}

\newcommand\N{\mathbb N}\newcommand\Z{\mathbb Z}

\def\g{\gamma}
\def\de{\delta}

\def\O{\Omega}
\def\th{\theta}

\def\vp{\varphi}
\def\epsilon{\varepsilon}
\def\e{\varepsilon}


\newcommand\br{\begin{rem}}
\newcommand\er{\end{rem}}
\newcommand\bp{\begin{pmatrix}}
\newcommand\ep{\end{pmatrix}}
\newcommand\be{\begin{equation}}
\newcommand\ee{\end{equation}}
\newcommand\ba{\begin{equation}\begin{aligned}}
\newcommand\ea{\end{aligned}\end{equation}}

\newcommand\nn{\nonumber}



\setlength{\evensidemargin}{0in} \setlength{\oddsidemargin}{0in}
\setlength{\textwidth}{5.5in} \setlength{\topmargin}{0in}
\setlength{\textheight}{8in}

\newcommand{\CalB}{\mathcal{B}}

\newcommand{\II}{{\mathbb I}}

\newcommand{\SSS}{{\mathbb S}}

\newcommand{\supp}{{\rm supp }}

\newcommand{\vv}{{\mathbf v}}

\newcommand{\vr}{\varrho}

\newcommand{\vu}{\vc{u}}
\newcommand{\vf}{\vc{f}}
\newcommand{\vg}{\vc{g}}
\newcommand{\vc}[1]{{\bf #1}}
\newcommand{\Div}{{\rm div\,}}
\newcommand{\Grad}{\nabla_x}

\newcommand{\dx}{{\rm d} {x}}

\newcommand{\intO}[1]{\int_{\O} #1 \ \dx}
\newcommand{\intOe}[1]{\int_{\O_\e} #1 \ \dx}


\newcommand{\dive}{{\rm div\,}}

\newtheorem{defi}{Definition}[section]
\newtheorem{theorem}[defi]{Theorem}
\newtheorem{proposition}[defi]{Proposition}
\newtheorem{lemma}[defi]{Lemma}

\newtheorem{remark}[defi]{Remark}

\numberwithin{equation}{section}
\textwidth 18 cm
\hoffset - 1 cm

\begin{document}

\title{Homogenization problems for the compressible Navier-Stokes system in 2D perforated domains}

\author{ \v S\' arka Ne\v casov\' a \footnote{Institute of Mathematics, Czech Academy of Sciences, \v Zitn\' a 25, 115 67 Praha 1, Czech Republic, {\tt matus@math.cas.cz.}} \and  Jiaojiao Pan\footnote{Corresponding author, Department of Mathematics, Nanjing University, 22 Hankou Road, Gulou District, Nanjing 210093, China, {\tt panjiaojiao@smail.nju.edu.cn.}}}

\date{}

\maketitle

\begin{abstract}

In this paper, we study the homogenization problems for the stationary compressible Navier-Stokes system in a bounded 2D domain, where the domain is perforated with very tiny holes (or obstacles) whose diameters are much smaller than their mutual distances. We obtain that the process of homogenization doesn't change the motion of the fluids. From another point of view, we obtain the same system of equations in the asymptotic limit. It is the first result of homogenization problem in the compressible case in 2 dimension.

\end{abstract}

{\bf Keywords.} Homogenization; Navier-Stokes system; Perforated domains; Bogovskii operator.
\par{\bf Mathematics Subject Classification.} 35B27, 76M50, 76N06.


\renewcommand{\refname}{References}

\section{Introduction}
\label{Introduction}
Homogenization of {color{blue} the }Newtonian fluid in physical domains perforated by a large number of tiny holes plays an important role in fluid mechanics and has gain lots of interest. Viscous fluid flows passing a great many fixed solid obstacles is a situation frequently occurring in real applications referring to \cite{book-hom}. Based on these applications, the models like stationary, viscous fluid flows represented by the the standard Stokes or Navier-Stokes system of equations in porous medium could be of vital importance.

The typical diameter and mutual distance of these holes become the main factors in the asymptotic behavior of fluid flows
in the regime where the number of holes tends to infinity and their size tends to zero. With the increasing number of holes, the limit motion of fluid flow approaches a state governed by certain {\em homogenized} equations which are homogeneous in form of different cases (without obstacles).

Allaire \cite{ALL-NS1}, \cite{ALL-NS2} (or earlier results by Tartar \cite{Tartar1}) provided a systematical study of Stokes and/or Navier-Stokes system for three different circumstances where the holes are periodically distributed with different size. Roughly speaking, the asymptotic limit behavior is governed by Darcy's law when the holes are of supercritical size;
when the holes are of the critical size, the asymptotic limit behavior gives rise to Brinkman's law; the subcritical size of holes makes no differences in the motion of the asymptotic limit - the limit problem coincides with the original system of equations.

Moreover, relevant results has been extended to the evolutionary (time-dependent) incompressible Navire-Stokes system by Mikeli\'{c} \cite{Mik} and Allaire \cite{ALL-3}, and more recently in \cite{FeNaNe}. For the evolutionary barotropic (compressible) Navier-Stokes system, Masmoudi \cite{Mas-Hom} obtained that the homogenization
limit was governed by porous medium equation (Darcy's law) where the diameter of the holes is comparable to their mutual distance (critical size), and similar results for the full Navier-Stokes-Fourier system were obtained in \cite{FNT-Hom}.  In \cite{ DFL, FL1, Lu-Schwarz18}, the authors considered the case of small holes for the compressible Navier-Stokes equations and in \cite{Lu-Pokorny}, steady compressible Navier–Stokes–Fourier system is considered, homogenized equations mentioned above remain the same asymptotic limit as the original ones. Let us also mention the case when the hole is the moving rigid body. In the case of the compressible fluid with the moving rigid body the homogenization was study by Bravin and Ne\v casov\' a \cite{BN}.

\subsection{Problem formulation}
\label{Problem formulation}

Similar to the case in \cite{DFL} and \cite{FL1}, where the homogenization of 3D steady compressible Navier-Stokes equations is considered, here we concentrated on homogenization of the compressible (isentropic) stationary Navier-Stokes system in two spatial dimensions of the subcritical case and we show that the asymptotic limit coincides with the original one, where $\e$ denotes the mutual distance between the holes and the diameter of the holes is taken as $a_{\e}= e^{-\sigma\varepsilon^{-\alpha}}$ with $\a>2$.
Considering a bounded domain $\Omega \subset R^2$ of class $C^2$, we introduce a family of $\e$-dependent \emph{perforated domains}
$\{ \Omega_\e \}_{\e > 0}$,
\be\label{1.1}
\Omega_\e = \Omega \setminus \bigcup_{k\in K_\e} T_{\e, k},\ \ K_\e:=\{k|\ \e \bar{C}_k\subset\O\}
\ee
where the sets $T_{\e,k}$ represent \emph{holes} or \emph{obstacles}. We suppose the following property concerning the distribution of the holes:
 \be\label{1.2}
T_{\e,k} := x_{\e,k} + a_\e T_k \subset \subset B(x_{\e,k}, b_0a_{\e} ) \subset \subset \e C_k\subset\O,
\ee
with
 \be\label{1.3}
C_k := (0,1)^2+k,\ \ k\in \mathbb{Z}^2,\ a_{\e}= e^{-\sigma\varepsilon^{-\alpha}}\ \mbox{for} \ \ \alpha>2.
\ee
Here $x_{\e,k}\in T_{\e,k}, \ b_0>0$ and $\sigma$ is positive constant independent of $\varepsilon$, for each $k$,  $T_{k}\subset R^2$ is a simply connected bounded domain of class $C^2$, $B(x,r)$ denotes the open ball centered at $x$ with radius $r$ in $R^2$.
The diameter of each $T_{\e,k}$ is of size
$O(a_\e)$ and their mutual distance  is $O(\e)$, where their total number $|K_\e|$ can be estimated as
\be\label{number-holes}
|K_{\e}| \leq  \frac{|\Omega|}{\e^2}(1+o(1)).\nn
\ee
For convenience, we use the symbol $L^{q}_0(\Omega)$ to denote the space of functions in $L^q(\Omega)$ with zero integral mean
\be\label{1.4}
L^q_0(\Omega):=\left\{f\in L^q(\Omega): \, \int_{\O} f\,\dx=0\right\}.
\ee
Then, we consider the following stationary (compressible) \emph{Navier-Stokes system in $\Omega_{\varepsilon}$}
\be\label{1.5}
\dive (\vr \vu) = 0,
\ee
\be\label{1.6}
 \dive (\vr \vu \otimes \vu) +\nabla p(\vr) =
\dive \SSS(\nabla \vu)+\vr \vf+\vg,
\ee
\be\label{1.7}
\SSS (\nabla \vu) = \mu \left( \nabla \vu + \nabla^t \vu - \frac{2}{3} (\dive \vu) \II \right) + \eta (\dive \vu) \II,\ \mu > 0,\ \eta \geq 0.
\ee
Here $\vr$ is the fluid mass density, $\vu$ is the velocity field, $p=p(\vr)$ is the pressure, $\SSS(\nabla \vu)$
stands for the Newtonian viscous stress tensor with constant viscosity coefficients $\mu, \eta$.
\par In the spatial domain $\O_\e$, the system is supplemented with the standard no-slip boundary condition
\be\label{1.8}
\vu=0\quad \mbox{on}\ \partial\Omega_\e.
\ee

For the sake of simplicity, we concentrate on the
\emph{isentropic} pressure-density state relationship
\be\label{1.9}
p(\vr)=a\vr^\gamma,\ a > 0,
\ee
with the adiabatic exponent $\g$, which will be specified as follows.

The motion of the fluid is driven by the volume force $\vf$ and nonvolume force $\vg$, defined on the whole domain $\Omega$ and independent of $\e$,
those are supposed to be uniformly bounded,
\be\label{1.10}
\|\vf\|_{L^\infty(\O; R^2)}+\|\vg\|_{L^\infty(\O; R^2)}\leq C<\infty.
\ee
Specifically, we use the symbol $C$ to denote a generic bounded constant that may vary from line to line in the following contents but it is independent of the parameters of the problem, in particular of $\e$. Furthermore, we use the symbol $\tilde{h}$ to denote the $zero-extension$ of $h$ in $R^{2}$, which means
\be\label{1.11}
\tilde{h}=h \ \ \mbox{in} \ \ \Omega_{\e}, \ \ \ \tilde{h}=0 \ \ \mbox{in} \ \ R^{2}\setminus\Omega_{\e}.
\ee

To be consistent with its physical interpretation,
the density $\vr$ is non-negative and we fix the total mass of the fluid to be
\be\label{1.12}
\displaystyle 0<\inf_{0<\e<1} M_\e\leq M_\e = \int_{\Omega_\e} \vr \ \dx\leq\sup_{0<\e<1} M_\e<\infty .
\ee

In particular, the $Restriction \ operator$ introduced by Allaire \cite{ALL-NS1} can be used in a compatible way in $2D$ to construct the inverse of the
divergence - the so-called Bogovskii's operator (see Bogovskii \cite{bog}, Galdi \cite[Chapter 3]{Galdi}).

 The paper is organized as follows. In Section \ref{Problem formulation} to Section \ref{Main results}, we introduce the formulation of the problem, the definition of weak solutions and state our main results. Then in section \ref{Preliminaries}, we introduce the Restriction operator and construct the inverse of the divergence - a Bogovskii's type operator which plays a crucial role in the proof of the uniform bound to the solution [$\rho$,$u$]. After that, uniform estimates are obtained via this operator in Section \ref{Uniform bounds} to identify the asymptotic limit for the Navier-Stokes system in perforated domains. In Section \ref{Equations in a fixed domain} to Section \ref{Convergence of pressure term - end of the proof}, we give the convergence (or homogenization) process of 2D compressible Navier-Stokes system in perforated domains, which shows the homogenization process for the stationary compressible Navier-Stokes equations in a perforated domain is not affected by obstacles and the limit problem coincides with the original one. Finally, we obtain the main results listed in Section \ref{Main results}.

\subsection{Weak solutions}
\label{Weak solutions}

We recall the definition of finite energy weak solutions to \eqref{1.5}-\eqref{1.8}, see \cite[Definition 4.1]{N-book}.

\begin{defi}\label{Definition 1.1}
A couple of functions $[\vr, \vu]$ is said to be a \emph{finite energy weak solution} of the Navier-Stokes system \eqref{1.5}-\eqref{1.7} supplemented with the conditions \eqref{1.8}-\eqref{1.12} in $\O_\e$ provided:
\be\label{1.13}
 \vr \geq 0~ \mbox{ a.e. in}~ \O_\e, \int_{\O_\e}\vr \ \dx = M_\e,\ \vr \in  L^{\beta(\gamma)}(\O_\e) \ \mbox{for some} ~\beta(\gamma)  > \gamma, \ \vu \in  W_0^{1,2}(\O_\e; R^2);
\ee
 for any test functions   $\psi \in C^\infty( \overline{\O}_\e)$   and $\varphi \in C_c^\infty( {\O_\e}; R^2)$:
\be\label{1.14}
\intOe{ \vr \vu \cdot \nabla \psi  }  =0;
\ee
\be\label{1.15}
 \intOe{ \vr \vu \otimes \vu : \nabla \varphi+ p(\vr) \dive \varphi-\SSS(\nabla \vu) : \nabla \varphi +(\vr \vf+\vg)\cdot \varphi}=0;
\ee

 \item and the \emph{energy inequality}
\be\label{1.16}
\intOe{ \SSS(\nabla \vu) : \nabla \vu } \leq \intOe{ (\vr\vf+\vg)\cdot \vu }
\ee
holds.
\end{defi}

\begin{remark}
A \emph{finite energy weak solution} of the Navier-Stokes system \eqref{1.5}-\eqref{1.7} in $\O$ is similar to it in $\O_\e$.
\end{remark}

\begin{defi}\label{Definition 1.3} A finite energy weak solution $[\vr, \vu]$ (see \cite{N-book}) is said to be a \emph{renormalized weak solution} if
\be\label{1.17}
\int_{R^2}{ b(\vr)\vu \cdot \Grad \psi + \Big( b( \vr) - b'(\vr) \vr \Big) \dive \vu\ \psi }\ \dx = 0
\ee
for any $\psi \in  C_c^\infty(R^2)$, where $[\vr, \vu]$ were extended to be zero outside $\O_\e$,
and any $b\in C^0([0,\infty))\cap C^1((0,\infty))$ such that
\be\label{1.18}
b'(s)\leq c\, s^{-\lambda_0} \ \mbox{for}\  s\in (0,1],\quad \ b'(s)\leq c\,s^{\lambda_1} \ \mbox{for}\  s\in [1,\infty),
\ee
with
\be\label{1.19}
c>0,\quad \lambda_0<1, \quad -1<\lambda_1 \leq \frac{\beta(\gamma)}{2}-1.
\ee
\end{defi}

\begin{lemma}\label{Lemma 1.4}
By DiPerna-Lions' transport theory (see {\rm \cite[Section II.3]{DiP-L}} and the modification in {\rm \cite[Lemma 3.3]{N-book}}), for any $r\in L^\beta(\Omega),~\beta\geq 2,\ \vv\in W^{1,2}_0(\Omega)$, where $\Omega\subset  R^2$ is a bounded domain of class $C^2$, such that
\begin{equation} \label{1.20}
\dive (r\vv)=0 \quad \mbox{in}\quad \mathcal{D}'(\Omega),
\end{equation}
the renormalized equation
\be\label{1.21}
 \dive\big(b(r)\vv\big)+\big(r b'(r) -b(r)\big)\dive \vv=0, \quad \mbox{holds in}\ \mathcal{D}'(R^2),
\ee
for any $b\in C^0([0,\infty))\cap C^1((0,\infty))$ satisfying \eqref{1.18}-\eqref{1.19} provided $r$ and $\vv$ have been extended to be zero outside $\Omega$.
\end{lemma}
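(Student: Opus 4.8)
The plan is to reduce the statement to the classical DiPerna–Lions renormalization result, so the work amounts to verifying that the hypotheses of that theory are met once $r$ and $\vv$ are extended by zero outside $\Omega$. First I would fix $r \in L^\beta(\Omega)$ with $\beta \ge 2$ and $\vv \in W^{1,2}_0(\Omega)$, set $r = 0$, $\vv = 0$ on $R^2 \setminus \Omega$, and observe that the zero-extension of $\vv$ lies in $W^{1,2}(R^2)$ precisely because $\vv$ vanishes on $\partial\Omega$ in the trace sense (this is where $W^{1,2}_0$ rather than $W^{1,2}$ is essential); moreover $\Div \vv$ extended by zero equals the distributional divergence of the extended field on all of $R^2$. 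Likewise $r\vv \in L^{q}(R^2; R^2)$ for $q = \tfrac{2\beta}{\beta+2} \ge 1$ by Hölder (using $\vv \in L^p$ for all $p<\infty$ in 2D via Sobolev embedding, in fact $\vv \in L^p$ for every finite $p$), and $\Div(r\vv) = 0$ holds in $\mathcal D'(R^2)$ because the hypothesis \eqref{1.20} holds in $\mathcal D'(\Omega)$ and both sides vanish identically on the complement — here one checks that no spurious boundary term appears, again a consequence of $\vv|_{\partial\Omega} = 0$.

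Next I would invoke the DiPerna–Lions commutator lemma in the form adapted to compressible fluids (as in \cite[Section II.3]{DiP-L} and the integrability-tailored version \cite[Lemma 3.3]{N-book}): regularize by convolution, $r_\delta = r * \omega_\delta$, $\vv_\delta = \vv * \omega_\delta$, write the transport equation for $b(r_\delta)$ using the smooth chain rule, and pass to the limit $\delta \to 0$. The key estimate is that the commutator $R_\delta := \Div\big((r\vv)*\omega_\delta\big) - \Div\big(r_\delta \vv\big) \to 0$ in $L^1_{\mathrm{loc}}(R^2)$; combined with the growth bounds \eqref{1.18}–\eqref{1.19} on $b$, which guarantee $b(r), \, (rb'(r) - b(r)) \in L^{?}$ with exponents compatible with $\vv$ and $\Div \vv$ respectively (the condition $\lambda_1 \le \tfrac{\beta(\gamma)}{2} - 1$, with $\beta \ge 2$ here playing the role of $\beta(\gamma)$, is exactly what makes $b(r) \in L^2$ so that $b(r)\vv \in L^1$ and $(rb'(r)-b(r))\Div\vv \in L^1$), this yields \eqref{1.21} in $\mathcal D'(R^2)$. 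The only delicate accounting is tracking which Lebesgue exponent each nonlinear term lands in, and confirming that the pairing against a test function $\psi \in C^\infty_c(R^2)$ (as in \eqref{1.17}) makes sense — but since everything is supported in $\overline\Omega$ and the exponents close by the choice $\beta \ge 2$, this is routine.

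The main obstacle — really the only genuine point — is the passage to the whole space $R^2$: one must be sure that extending the renormalized equation from $\mathcal D'(\Omega)$ to $\mathcal D'(R^2)$ introduces no defect measure concentrated on $\partial\Omega$. This is handled by the no-slip condition $\vv \in W^{1,2}_0(\Omega)$: the zero-extension commutes with weak differentiation, so the extended pair $(r,\vv)$ still solves the continuity equation globally and still satisfies the DiPerna–Lions structural hypotheses globally, whence the commutator argument applies verbatim on $R^2$. With this in place the conclusion \eqref{1.21} follows, and the lemma is proved. I would present the argument by (i) stating the extension properties, (ii) quoting the commutator lemma and renormalization on $R^2$, and (iii) checking the integrability bookkeeping against \eqref{1.18}–\eqref{1.19}.
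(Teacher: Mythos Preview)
The paper does not supply a proof of Lemma~\ref{Lemma 1.4}: the lemma is stated as a known consequence of DiPerna--Lions transport theory, with the proof delegated entirely to the cited references \cite[Section II.3]{DiP-L} and \cite[Lemma 3.3]{N-book}. Your proposal is a correct outline of precisely the standard argument contained in those references --- zero-extension to $R^2$ via the trace condition $\vv\in W^{1,2}_0$, mollification, the commutator estimate, and the integrability check against the growth bounds \eqref{1.18}--\eqref{1.19} --- so you are not diverging from the paper's approach but rather filling in what the paper leaves to citation.
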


\begin{remark}\label{Remark 1.5}
Recall that the relevant values of the adiabatic exponent are $1 \leq \gamma \leq 5/3$, where the case $\gamma = 1$ corresponds to the isothermal case while $\gamma = 5/3$ is the adiabatic exponent of the monoatomic gas. Based on energy type arguments combined with the refined pressure estimates, Lions \cite{Lions-C} proved the existence of weak solutions in the range $\gamma > 5/3$ in 3D. Lions' theory for the existence of weak solutions to homogeneous Dirichlet (no-slip) boundary conditions has been extended to the physical range $\gamma \leq 5/3$ by several authors, see B\v rezina and Novotn\' y \cite{BREZNOV}, Plotnikov and Sokolowski \cite{PloSokbook} for $\g>3/2$, Frehse, Steinhauer and Waigant \cite{FSW} for $\gamma > 4/3$, Plotnikov and Weigant \cite{PW} for $\g>1$. While in two spatial dimensions with $\g>1$, Lions (\cite{Lions-C}, Chapter 6) has given the existence of the weak solution $[\vr_\e,\vu_\e] \in [L^{2\g}(\O_\e)]\times[H_0^1(\O_\e)]^2$, which establishes the following uniform norm estimates for the solution $[\rho,u]$ in $\O$. Then, we need to show that the asymptotic limit of solutions $[\vr_\e, \vu_\e]$ of the compressible Navier-Stokes system (\ref{1.5})-(\ref{1.8}) in $\O_\e$ coincides with a solution of the same system on the homogeneous domain $\Omega$.
\end{remark}

\subsection{Main results}
\label{Main results}
In this paper, we consider the case of optimal adiabatic exponent $\g>1$ in the pressure law (\ref{1.9}), which is also an innovation in this article.

\begin{theorem}\label{Theorem 1.6}
Suppose the conditions \eqref{1.9}, \eqref{1.10}, \eqref{1.12} are satisfied. Let $\g>1$ and  $\a>2$ be given and $[\vr_\e,\vu_\e]_{0<\e<1}$ be a family of finite energy weak solutions to \eqref{1.5}-\eqref{1.8} in $\O_\e$, where \vf \ and \vg \ satisfy \eqref{1.10}. Then we have uniform estimates
\be\label{1.22}
\sup_{0<\e<1}\left(\|\vr_\e\|_{L^{2\g}(\O_\e)}+\|\vu_\e\|_{W_0^{1,2}(\O_\e; R^2)}\right)\leq C<\infty.
\ee
Moreover, up to a substraction of subsequence, the zero-extentsions $[\tilde{\vr_\e}, \tilde{\vu}_\e]$ to outside $\Omega_\e$ satisfy
\be\label{1.23}
\tilde{\vr}_\e  \to \vr \ \mbox{weakly in} \ L^{2\g}(\O),\quad  \tilde{\vu}_\e \to \vu \   \mbox{weakly in}\  W_{0}^{1,2}(\O; R^2),
\ee
where $[\vr, \vu]$ is a finite energy weak solution to the same system of equations \eqref{1.5}-\eqref{1.8} in the limit domain $\O$.

\end{theorem}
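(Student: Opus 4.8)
The plan is to combine the Bogovskii-operator-based uniform bounds with the standard Lions compactness machinery for the compressible Navier–Stokes system, adapted to the 2D perforated setting. The proof naturally splits into two halves: (i) the uniform estimate \eqref{1.22}, and (ii) the identification of the limit $[\vr,\vu]$ as a finite energy weak solution on $\O$.

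\medskip

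\noindent\textbf{Step 1: Uniform estimates.} First I would test the momentum equation \eqref{1.15} with a function built from the density via the Bogovskii operator on $\O_\e$. The key point, and the reason the subcritical scaling $a_\e=e^{-\sigma\e^{-\a}}$ with $\a>2$ enters, is that the norm of the Bogovskii operator $\mathcal B_\e\colon L^q_0(\O_\e)\to W^{1,q}_0(\O_\e;\R^2)$ must be shown to be bounded \emph{uniformly in $\e$}; this is exactly what the Restriction operator of Allaire together with the construction announced in Section~\ref{Preliminaries} delivers in 2D, with the logarithmically small holes absorbing the capacity loss. Granting that, the standard argument goes: use the energy inequality \eqref{1.16} with Korn/Poincar\'e (uniform on $\O_\e$ since $\O_\e\subset\O$) to get $\|\vu_\e\|_{W^{1,2}_0}\le C(1+\|\vr_\e\vf\|_{(W^{1,2}_0)^*})$, then test \eqref{1.15} with $\mathcal B_\e[\vr_\e^\theta-\frac1{|\O_\e|}\int\vr_\e^\theta]$ for a suitable small $\theta>0$ to obtain the improved pressure bound $\int_{\O_\e}\vr_\e^{\g+\theta}\le C$; bootstrapping together with the fixed-mass normalization \eqref{1.12} and the 2D Sobolev embedding $W^{1,2}\hookrightarrow L^q$ for all $q<\infty$ then yields $\theta=\g$, i.e. $\|\vr_\e\|_{L^{2\g}(\O_\e)}\le C$, closing \eqref{1.22}. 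The only $\e$-sensitive ingredient throughout is the uniform Bogovskii bound.

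\medskip

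\noindent\textbf{Step 2: Passage to the limit.} Extending by zero as in \eqref{1.11}, the bound \eqref{1.22} gives, up to a subsequence, the weak convergences \eqref{1.23}, plus $\widetilde{\vr_\e\vu_\e}\rightharpoonup \vr\vu$ (using $W^{1,2}_0(\O_\e)\hookrightarrow\hookrightarrow L^q(\O)$ compactly after extension, since $\tilde{\vu}_\e\in W^{1,2}_0(\O)$) and $\widetilde{\vr_\e\vu_\e\otimes\vu_\e}\rightharpoonup\vr\vu\otimes\vu$, and $\widetilde{p(\vr_\e)}\rightharpoonup\overline{p(\vr)}$. Passing to the limit in \eqref{1.14}–\eqref{1.15} for test functions supported in $\O$: the subtlety is that \eqref{1.15} holds only for $\vp\in C_c^\infty(\O_\e)$, so for a general $\vp\in C_c^\infty(\O)$ one must correct it near the holes. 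Here I would use the cut-off functions $g_\e$ associated with the holes (standard in this circle of ideas: $g_\e=0$ near each $T_{\e,k}$, $g_\e=1$ outside $B(x_{\e,k},\text{something})$, with $\|\nabla g_\e\|_{L^2(\O)}\to0$ thanks to the logarithmic capacity of 2D small holes being $o(1)$ precisely when $\a>2$), test \eqref{1.15} with $g_\e\vp$, and show the error terms $\int \vr_\e\vu_\e\otimes\vu_\e:\nabla g_\e\otimes\vp$, $\int p(\vr_\e)g_\e\dive\vp$ correction, and $\int\SSS(\nabla\vu_\e):\nabla(g_\e)\otimes\vp$ all vanish as $\e\to0$. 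This yields that $[\vr,\vu]$ satisfies \eqref{1.14} and a version of \eqref{1.15} with $p(\vr)$ replaced by $\overline{p(\vr)}$ on all of $\O$; the energy inequality \eqref{1.16} on $\O$ follows by weak lower semicontinuity.

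\medskip

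\noindent\textbf{Step 3: Strong convergence of the density.} The remaining and genuinely hard step is to show $\overline{p(\vr)}=p(\vr)$, i.e. $\widetilde{\vr_\e}\to\vr$ strongly (at least in $L^1(\O)$). This is the 2D version of the effective-viscous-flux / Lions–Feireisl argument. I would: (a) establish that the limit $[\vr,\vu]$ is a renormalized solution on $\O$ via Lemma~\ref{Lemma 1.4} (note $\beta(\g)=2\g$ so \eqref{1.19} is available), and that $[\vr_\e,\vu_\e]$ are renormalized on $\O_\e$ in the sense of Definition~\ref{Definition 1.3}; (b) prove the effective viscous flux identity $\overline{p(\vr)\vr}-\overline{p(\vr)}\,\vr=(\mu+\eta-\tfrac{2\mu}{3}+\tfrac\mu3)\big(\overline{\vr\dive\vu}-\vr\dive\vu\big)$ — in 2D the relevant constant is $(2\mu+\eta-\tfrac{2\mu}{3})$ — using the momentum equation tested against $\nabla\Delta^{-1}(\widetilde{\vr_\e})$ (cut off near the holes using $g_\e$ again, the error terms vanishing for the same capacity reason), together with the div–curl/compensated compactness lemma; (c) combine with the renormalized equations for $b(s)=s\log s$ to derive that the "oscillation defect" $\overline{\vr\log\vr}-\vr\log\vr$ is nonincreasing and initially zero, forcing $\overline{\vr\log\vr}=\vr\log\vr$, hence strong $L^1$ convergence of $\widetilde{\vr_\e}$, hence $\overline{p(\vr)}=p(\vr)$. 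The main obstacle is ensuring that \emph{all} the hole-correction error terms in steps (b) are $o(1)$; this rests entirely on the 2D capacity estimate $\mathrm{cap}(T_{\e,k})\sim 1/\log(\e/a_\e)\to0$, which holds precisely because $\log(\e/a_\e)\gtrsim \e^{-\a}$ with $\a>2>0$, and on the uniform Bogovskii/Restriction bounds from Section~\ref{Preliminaries} which control the density powers uniformly. Once strong density convergence is in hand, $[\vr,\vu]$ satisfies \eqref{1.15} with the genuine pressure, completing the proof.
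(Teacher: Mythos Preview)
Your proposal matches the paper's proof in all essential points: the uniform Bogovskii test (the paper goes directly to $\theta=\gamma$ rather than bootstrapping, relying on the fixed-$\e$ existence theory which already gives $\vr_\e\in L^{2\gamma}(\O_\e)$), the cut-off error control via $\|g_\e\|_{W^{1,2}}^2\le C\e^{-2}|\log a_\e|^{-1}\sim C\e^{\alpha-2}$, and the Lions effective-viscous-flux argument combined with the renormalized equation for $b(s)=s\log s$. One correction to Step~3: your phrase ``oscillation defect nonincreasing and initially zero'' is evolutionary language with no meaning in the stationary problem; what the paper actually does is integrate the two renormalized equations (for $[\tilde\vr_\e,\tilde\vu_\e]$ and the limit $[\vr,\vu]$) against cut-offs $\psi_n\nearrow 1_\O$, use Hardy's inequality to kill the boundary terms $\int(\overline{\vr\log\vr})\vu\cdot\nabla\psi_n$ and $\int(\vr\log\vr)\vu\cdot\nabla\psi_n$, combine with the flux identity to obtain $\int_\O\overline{p(\vr)\vr}=\int_\O\overline{p(\vr)}\,\vr$, and conclude by monotonicity.
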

\subsection{Preliminaries}

\label{Preliminaries}
\subsubsection{Bogovskii's operator}
Our principal result concerns the construction of the inverse of the divergence operator on the family of perforated domains $\{ \O_\e \}_{\e > 0}$.
\begin{proposition}\label{Proposition 1.7} Let $\{ \O_\e \}_{\e > 0}$ be a family of domains with the properties specified in {\rm Section \ref{Problem formulation}}. Then there exists a linear operator
\[
\CalB_\e : L_0^{2}(\O_\e) \to W_0^{1,2}(\O_\e; R^2),
\]
such that for any $f\in L_0^{2}(\O_\e)$,
\ba\label{1.24}
&\dive(\CalB_\e(f)) =f \ \mbox{in} \ \O_\e,\\
&\|\CalB_\e(f)\|_{W_0^{1,2}(\O_\e; R^2)}\leq C\|f\|_{L^2(\O_\e)},
\ea
for some constant $C$ independent of $\e$.
\end{proposition}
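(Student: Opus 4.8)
The plan is to build $\CalB_\e$ by combining the classical Bogovskii operator on the unperforated domain $\O$ with a correction supported near the holes, following the restriction-operator strategy of Allaire and its adaptations to compressible perforated-domain problems (e.g.\ \cite{DFL,FL1,Lu-Pokorny}). First I would fix, once and for all, a Bogovskii operator $\CalB_\O : L_0^2(\O) \to W_0^{1,2}(\O;R^2)$ with $\dive \CalB_\O(g) = g$ and $\|\CalB_\O(g)\|_{W_0^{1,2}(\O)} \le C(\O)\|g\|_{L^2(\O)}$; this exists since $\O$ is a bounded $C^2$ domain (see \cite[Chapter 3]{Galdi}). The difficulty is that for $f \in L_0^2(\O_\e)$ the zero-extension $\tilde f$ need not have zero mean over $\O$, and even if we correct the mean, $\CalB_\O(\tilde f)$ will not vanish on the hole boundaries $\partial T_{\e,k}$. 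So the real content is the construction of a local operator on each cell that kills the velocity on the holes at a cost that is uniform in $\e$.

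The key step is a cell-wise estimate. On a single hole $T_{\e,k} \subset\subset B(x_{\e,k}, b_0 a_\e) \subset\subset \e C_k$, I would construct a cut-off-and-Bogovskii corrector: given the trace of $\CalB_\O(\tilde f)$ on the annular region $\e C_k \setminus T_{\e,k}$, multiply by a cut-off $\chi_{\e,k}$ that is $1$ near $T_{\e,k}$ and $0$ near $\partial(\e C_k)$, and then use a Bogovskii operator on the annulus $B(x_{\e,k}, b_0 a_\e)\setminus T_{\e,k}$ (rescaled from a fixed reference annulus $B(0,b_0)\setminus T_k$) to absorb the divergence this cut-off produces. The crucial point in 2D is the scaling of the Bogovskii constant: on a domain of diameter $\sim a_\e$ the operator norm picks up a factor that is \emph{scale invariant} for the $L^2\to L^2$ bound on the gradient but the $L^2 \to L^2$ bound on the function itself scales like $a_\e$; one must track these factors and the factor $\log(1/a_\e)$ that enters the 2D annular estimate, and check that they are beaten by the smallness $a_\e = e^{-\sigma \e^{-\alpha}}$ together with the $|K_\e| \lesssim \e^{-2}$ holes when one sums the squares of the cell contributions. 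With $\alpha > 2$ this is exactly where the hypothesis is used: $|K_\e|\cdot(\text{per-hole cost})^2 \to 0$, so the total corrector is bounded uniformly — in fact negligible.

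Concretely the steps are: (i) split $f = f_0 + \sum_k c_k \mathbf{1}_{\e C_k \cap \O_\e}$ where $f_0$ has zero mean on each cell and the $c_k$ collect the cell averages, handling the piecewise-constant part by a coarse Bogovskii operator on $\O$ (or directly, since $\sum c_k \mathbf 1$ extended by zero has zero mean on $\O$); (ii) set $v_0 = \CalB_\O(\tilde f)$; (iii) on each $k \in K_\e$ define the corrector $w_k = \chi_{\e,k} v_0 - \CalB_{A_{\e,k}}\!\big(\dive(\chi_{\e,k} v_0) - \langle \cdot\rangle\big)$ on the annulus $A_{\e,k} = B(x_{\e,k},b_0 a_\e)\setminus T_{\e,k}$, extended by zero, where the mean-value subtraction makes the argument admissible for Bogovskii; (iv) set $\CalB_\e(f) = v_0 - \sum_k w_k$, which by construction lies in $W_0^{1,2}(\O_\e;R^2)$, satisfies $\dive \CalB_\e(f) = f$ in $\O_\e$, and obeys the uniform bound once the cell estimates from (iii) are summed. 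The main obstacle is step (iii)–(iv): getting the \emph{$\e$-uniform} control of $\sum_k \|w_k\|_{W^{1,2}}^2$, which forces one to use Sobolev embedding and the Poincar\'e inequality on the thin annulus carefully in 2D (where the borderline exponent makes the $\log$ appear), and to exploit that $v_0$, restricted to the union of small annuli whose total measure is $O(|K_\e| a_\e^2) = O(\e^{-2} e^{-2\sigma\e^{-\alpha}})$, has $L^2$-norm there going to zero faster than any power of $\e$. Once that smallness is in hand, the proposition follows.
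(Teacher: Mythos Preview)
Your overall strategy --- zero-extend $f$, apply the Bogovskii operator $\CalB_\O$ on the unperforated domain, then correct near each hole --- is the right skeleton, and it is exactly what the paper does. However, the specific corrector you propose has a genuine gap in two dimensions.

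First a minor point: the zero-extension $\tilde f$ of $f\in L_0^2(\O_\e)$ \emph{does} have zero mean on $\O$, since $\int_\O \tilde f=\int_{\O_\e}f=0$. So step (i) of your decomposition is unnecessary.

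The real issue is your choice of annulus $A_{\e,k}=B(x_{\e,k},b_0 a_\e)\setminus T_{\e,k}$, which has both radii of order $a_\e$. A cut-off $\chi_{\e,k}$ localized there forces $|\nabla\chi_{\e,k}|\sim a_\e^{-1}$, so the Bogovskii input $\dive(\chi_{\e,k}v_0)$ contains the term $\nabla\chi_{\e,k}\cdot v_0$ with
\[
\|\nabla\chi_{\e,k}\cdot v_0\|_{L^2(A_{\e,k})}\lesssim a_\e^{-1}\|v_0\|_{L^2(A_{\e,k})}.
\]
In three dimensions this is harmless because $W^{1,2}\hookrightarrow L^6$ gives $\|v_0\|_{L^2(A_{\e,k})}\lesssim a_\e\|v_0\|_{W^{1,2}}$, and the factors cancel; this is why the construction in \cite{DFL,FL1} works. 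In two dimensions $W^{1,2}$ just misses $L^\infty$: for any finite $q$ one only gets $\|v_0\|_{L^2(A_{\e,k})}\lesssim a_\e^{1-2/q}\|v_0\|_{L^q}$, hence the per-cell cost carries a factor $a_\e^{-2/q}=\exp\big((2\sigma/q)\,\e^{-\alpha}\big)$, which grows faster than any power of $\e^{-1}$. Summing over $|K_\e|\sim\e^{-2}$ cells cannot absorb this; your claim that the small measure of $\bigcup_k A_{\e,k}$ makes things ``go to zero faster than any power of $\e$'' is true for $\|v_0\|_{L^2}$ on that set, but what must stay bounded is $a_\e^{-1}$ times that quantity, and $a_\e^{-1}$ explodes super-polynomially.

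The paper avoids this by \emph{not} building the corrector from a cut-off plus Bogovskii on the $a_\e$-scale annulus. Instead it invokes Allaire's restriction operator $R_\e$ (Lemma~\ref{Lemma 1.8}), which on each cell solves a local Stokes problem \eqref{1.31}--\eqref{1.33} on the annulus $E_{\e,k}=B(x_k,b_1\e)\setminus T_{\e,k}$ with outer radius of order $\e$ and inner radius of order $a_\e$. Allaire's two-dimensional analysis shows that this operator satisfies $\|R_\e(\vu)\|_{W_0^{1,2}(\O_\e)}\le C\|\vu\|_{W_0^{1,2}(\O)}$ with $C$ independent of $\e$; this is precisely where the logarithmic 2D capacity $|\log a_\e|^{-1}\sim\e^{\alpha}$ enters and is controlled. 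Granting that, the paper simply sets $\CalB_\e(f)=R_\e\big(\CalB_\O(\tilde f)\big)$ and checks in two lines that the divergence is preserved (because $\dive\CalB_\O(\tilde f)=\tilde f=0$ on each $T_{\e,k}$, so the averaging term in \eqref{1.32} drops out) and that the bound \eqref{1.24} follows from \eqref{1.27}. If you want to redo the construction by hand, the corrector must live on the $\e$-scale annulus and you must either reprove Allaire's uniform Stokes estimate there or quote it.
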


The existence of Bogovskii's operator on a \emph{fixed} Lipschitz domain has been shown by several authors, such as Galdi \cite{Galdi} or
Acosta, Dur\'an and Muschietti \cite{ADM} and  Diening, R{\r{u}}\v{z}i\v{c}ka and Schumacher \cite{DRS}, especially by Bogovskii \cite{bog},  Pileckas and Kapitanskii \cite{PK}. Here, we need to establish the uniform estimate (\ref{1.24}), where the constant $C$ is independent of $\e$. For the sake of readers, we briefly give a proof of Proposition 1.7. Referring to \cite{DFL}  and \cite{FL1}, such a Bogovskii's operator is established in 3D case. Considering the significance of the Restriction operator in proving the existence of Bogovskii's operator and therefore, we introduce the following Restriction operator $R_{\e}$ analogously by employing the Restriction operator constructed by Allaire (\cite{ALL-NS1}, Section 2.2).

\begin{lemma}\label{Lemma 1.8}
 For $\Omega_{\e}$ defined in Section \ref{Problem formulation}, there exists a linear bounded Restriction operator $R_{\e}: W_{0}^{1,2}(\Omega;R^{2})\rightarrow W_{0}^{1,2}(\Omega_\e;R^{2})$ such that

\be\label{1.25}
\mathbf{u}\in W_{0}^{1,2}(\O_\e; R^2)\Longrightarrow R_{\e}(\tilde{\mathbf{u}})=\mathbf{u}\ \ \mbox{in} \ \ \ \Omega_{\e}
\ee

\be\label{1.26}
\dive\mathbf{u}=0\ \ \mbox{in}\ \ \Omega\Longrightarrow \dive R_{\e}(\mathbf{u})=0\ \ \mbox{in} \ \ \ \Omega_{\e}
\ee

\be\label{1.27}
\|R_{\e}(\mathbf{u})\|_{W_0^{1,2}(\O_\e; R^2)}\leq C\|\mathbf{u}\|_{W_0^{1,2}(\O; R^2)},\ \ C\ \mbox{independent of}\ \e,
\ee
\end{lemma}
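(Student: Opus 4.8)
The plan is to construct $R_\e$ cell by cell, modifying a given field $\mathbf{u}\in W_0^{1,2}(\O;R^2)$ only inside those unit cells $\e C_k$ that actually contain a hole $T_{\e,k}$, and leaving it untouched elsewhere. Fix such a $k\in K_\e$. On the annular region $B(x_{\e,k}, b_0 a_\e)\setminus \overline{T_{\e,k}}$ --- or more conveniently on a slightly larger fixed-shape annulus, say $D_{\e,k}:=B(x_{\e,k}, 2b_0 a_\e)\setminus\overline{T_{\e,k}}$ --- I want to replace $\mathbf{u}$ by a field $\mathbf{w}_k$ that vanishes on $\partial T_{\e,k}$, equals $\mathbf{u}$ near the outer sphere $\partial B(x_{\e,k},2b_0 a_\e)$, and has the \emph{same flux through $\partial T_{\e,k}$} as $\mathbf{u}$ does, so that the divergence-free property is preserved in the sense of \eqref{1.26}. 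Concretely, one takes a smooth cutoff $\varphi_{\e,k}$ that is $0$ on $T_{\e,k}$ and $1$ outside $B(x_{\e,k}, 2b_0a_\e)$, sets a first guess $\varphi_{\e,k}\mathbf{u}$, and then corrects its divergence by subtracting a Bogovskii-type field on the annulus $D_{\e,k}$ solving $\dive \mathbf{z}_k = \dive(\varphi_{\e,k}\mathbf{u}) - \langle\,\cdot\,\rangle$ with a constant chosen to absorb the flux. Then $R_\e(\mathbf{u}):= \mathbf{u}$ outside $\bigcup_k D_{\e,k}$ and $R_\e(\mathbf{u}):=\varphi_{\e,k}\mathbf{u}-\mathbf{z}_k$ on each $D_{\e,k}$; properties \eqref{1.25} and \eqref{1.26} are then immediate from the construction, the first because nothing is changed when $\mathbf{u}$ already vanishes on the holes, the second because the flux correction keeps $\dive$ zero.

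The quantitative heart of the argument is \eqref{1.27}, and this is where the uniformity in $\e$ must be extracted by scaling. Rescaling $D_{\e,k}$ to the fixed reference annulus $D_k = B(0,2b_0)\setminus\overline{T_k}$ via $y=(x-x_{\e,k})/a_\e$, the Bogovskii operator on $D_k$ has an operator norm depending only on the fixed geometry of $T_k$ and $b_0$; tracking the powers of $a_\e$ that come out of the change of variables in $\|\nabla\cdot\|_{L^2}$ and $\|\cdot\|_{L^2}$ (in 2D, $\|\nabla \mathbf{z}_k\|_{L^2(D_{\e,k})}^2$ scales neutrally while the zeroth-order term picks up a favourable factor), one obtains
\[
\|\nabla R_\e(\mathbf{u})\|_{L^2(D_{\e,k})}
\le C\Big(\|\nabla \mathbf{u}\|_{L^2(D_{\e,k})} + a_\e^{-1}\|\mathbf{u}\|_{L^2(D_{\e,k})}\Big),
\]
with $C$ independent of $\e$ and $k$. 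The apparently dangerous factor $a_\e^{-1}$ is controlled by a Poincaré / Sobolev inequality on the annulus $\e C_k$ relative to the fixed cell: since $\mathbf{u}$ has no zero boundary data on $\partial(\e C_k)$ one cannot use Poincaré directly, so instead I would estimate $a_\e^{-1}\|\mathbf{u}\|_{L^2(D_{\e,k})}$ by interpolating against $\|\mathbf{u}\|_{L^2(\e C_k)}$ and $\|\nabla\mathbf{u}\|_{L^2(\e C_k)}$ on the full cell, using that $a_\e=e^{-\sigma\e^{-\alpha}}$ with $\alpha>2$ is \emph{super-exponentially} smaller than the cell size $\e$. This is precisely the point of the subcritical hole size: the logarithmic capacity of a 2D hole of radius $a_\e$ inside a cell of size $\e$ behaves like $\big(\log(\e/a_\e)\big)^{-1}=\big(\sigma\e^{-\alpha}+\log\e\big)^{-1}\to 0$, which kills the bad factor when summed.

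Finally I would sum over $k\in K_\e$. Using that the cells $\e C_k$ have disjoint interiors and that $R_\e(\mathbf{u})=\mathbf{u}$ off $\bigcup_k D_{\e,k}$, the squared norms add up:
\[
\|R_\e(\mathbf{u})\|_{W_0^{1,2}(\O_\e;R^2)}^2
\le C\sum_{k\in K_\e}\Big(\|\nabla\mathbf{u}\|_{L^2(\e C_k)}^2 + \text{(capacity factor)}\,\|\mathbf{u}\|_{H^1(\e C_k)}^2\Big)
\le C\|\mathbf{u}\|_{W_0^{1,2}(\O;R^2)}^2,
\]
the last inequality because the capacity factor is $o(1)$ uniformly in $k$ and the cell norms sum to the global norm. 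I expect the main obstacle to be exactly the bookkeeping of the $a_\e$-powers in the 2D scaling of the Bogovskii estimate together with the correct form of the logarithmic-capacity bound that tames the $a_\e^{-1}$ term --- the 2D case is genuinely different from the 3D estimates in \cite{ALL-NS1}, \cite{DFL}, \cite{FL1} because the decisive quantity is $\log(\e/a_\e)$ rather than a power of $\e/a_\e$, which is why the hypothesis $\alpha>2$ (rather than a critical exponent) is what makes the limit trivial. Everything else --- the cutoff construction, verifying \eqref{1.25}–\eqref{1.26}, the disjointness/summation --- is routine.
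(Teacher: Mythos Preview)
There is a genuine gap in your construction, and it lies in the choice of the correction annulus $D_{\e,k}=B(x_{\e,k},2b_0 a_\e)\setminus\overline{T_{\e,k}}$ with outer radius proportional to $a_\e$. With this choice your cutoff satisfies $|\nabla\varphi_{\e,k}|\sim a_\e^{-1}$, and the local estimate you obtain,
\[
\|\nabla R_\e(\mathbf{u})\|_{L^2(D_{\e,k})}\le C\big(\|\nabla\mathbf{u}\|_{L^2(D_{\e,k})}+a_\e^{-1}\|\mathbf{u}\|_{L^2(D_{\e,k})}\big),
\]
cannot be rescued by any capacity or interpolation argument: $a_\e^{-1}=e^{\sigma\e^{-\alpha}}$ is \emph{exponentially} large, whereas the capacity $(\log(\e/a_\e))^{-1}\sim\e^{\alpha}/\sigma$ and every Sobolev/Trudinger gain on the cell $\e C_k$ are only polynomial in $\e$. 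Concretely, if $\mathbf{u}$ is close to a nonzero constant $c$ on a cell $\e C_k$, then $a_\e^{-2}\|\mathbf{u}\|_{L^2(D_{\e,k})}^2\sim |c|^2\sim \e^{-2}\|\mathbf{u}\|_{L^2(\e C_k)}^2$, so after summation over $k$ you already pick up at least a factor $\e^{-2}$, destroying uniformity; for fields concentrated near the holes the situation is far worse. The logarithmic capacity you invoke is the energy of the \emph{optimal} cutoff spanning the full gap from radius $a_\e$ to radius $\e$; it has nothing to do with a cutoff living on the thin shell $[b_0 a_\e,2b_0 a_\e]$.

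The paper, following Allaire \cite{ALL-NS1}, performs the correction on the \emph{large} annulus $E_{\e,k}=B(x_k,b_1\e)\setminus T_{\e,k}$ of outer radius proportional to $\e$, by solving a Stokes problem there with inhomogeneous boundary data and forced divergence (see \eqref{1.28}--\eqref{1.33}). It is precisely this choice of scale that makes the $2$D logarithmic capacity enter: the extra lower-order term in the resulting bound is of order $\e^{-2}|\log a_\e|^{-1}\|\mathbf{u}\|_{L^2(\O)}^2\sim\e^{\alpha-2}\|\mathbf{u}\|_{L^2(\O)}^2$, which vanishes under the standing hypothesis $\alpha>2$ (cf.\ the cutoff estimate \eqref{2.25}). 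Your cutoff--plus--Bogovskii strategy can be salvaged, but only if you enlarge the support to $B(x_{\e,k},b_1\e)\setminus B(x_{\e,k},b_0 a_\e)$ and take the capacity-minimizing logarithmic profile $\varphi(r)=\log\!\big(r/(b_0 a_\e)\big)/\log\!\big(b_1\e/(b_0 a_\e)\big)$, so that $\|\nabla\varphi_{\e,k}\|_{L^2}^2\sim(\log(\e/a_\e))^{-1}$; then, and only then, does the bookkeeping close.
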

This Restriction operator is constructed in the proof of [\cite{ALL-NS1}, Lemma 2.1]  in the following way

\be\label{1.28}
 B(x_{k}, b_1\e)\subset\e C_k,\  \bar{B}_{\e,k}=\bar{B}(x_{k},b_0a_\e)\subset B(x_{k}, b_1\e).
\ee
Let us introduce the following decomposition of the cube $\e C$ with $k\in K_\e$:
\be\label{1.29}
 \e \bar{C}_k=T_{\e, n}\bigcup \bar{E}_{\e, k}\bigcup \bar{F}_{\e, k}\ \ \mbox{with} \ \ E_{\e, k}:=B(x_{k}, b_1\e)\setminus T_{\e, k},\ F_{\e, k}:=(\e C_k)\setminus B(x_{k}, b_1\e),
\ee
where $b_1>0$. For any $\mathbf{u}\in W_{0}^{1,2}(\O; R^2)$, we can define $R_{\e}$ by
\begin{eqnarray}
\label{1.30} \left\{
   \begin{array}{ll}
    {R_{\e}(\mathbf{u})}=\mathbf{u} \ \ \mbox{on} \ \ \e C_k\bigcap\Omega,\ \ \mbox{for}\ \  k \notin K_\e
 \\
   {R_{\e}(\mathbf{u})}=\mathbf{u} \ \ \mbox{on}\ \  F_{\e, k},\ \ {R_{\e}(\mathbf{u})}=0 \ \ \mbox{on}\ \  T_{\e, k},\ \ {R_{\e}(\mathbf{u})}=\mathbf{v}_{\e,k} \ \ \mbox{in}\ \  E_{\e, k},\ \ \mbox{for}\ \  k\in K_\e.
\end{array}
\right.
\end{eqnarray}
where $\mathbf{v}_{\e,k}\in W_0^{1,2}(\e C_k; R^2)$ satisfies
\be\label{1.31}
\nabla p_{\e,k}-\Delta\mathbf{v}_{\e,k}=-\Delta\mathbf{u} \ \ \mbox{in}\ \  E_{\e, k}
\ee
\be\label{1.32}
\mbox{div}\mathbf{v}_{\e,k}=\mbox{div}\mathbf{u}+\frac{1}{|E_{\e,k}|}\int_{ T_{\e, k}}\mbox{div}_{x}\mathbf{u} \ dx \ \ \mbox{in} \ \ E_{\e, k},
\ee

\be\label{1.33}
\mathbf{v}_{\e,k}=\mathbf{u}\ \ \mbox{on}\ \ \partial E_{\e, k}-\partial T_{\e, k},\ \ \mathbf{v}_{\e,k}=0 \ \ \mbox{on} \ \ \ \partial T_{\e, k}.
\ee

\begin{proof}[\bf Proof of Proposition 1.7.]
For $f\in L_{0}^{2}(\O_\e)$, we consider the following zero-extension of $f$
\be\label{1.34}
\bar{f}=f\ \ \mbox{in}\ \ \O_\e,\ \ \bar{f}=0 \ \ \mbox{on} \ \ \O\backslash\O_\e= \Omega \setminus \bigcup_{k\in K_\e}T_{\e,k}.
\ee
By classical Bogovskii's operator (see Bogovskii \cite{bog}) defined on domain $\O$, we obtain $\mathbf{u}:=\mathcal{B}(f)\in W_{0}^{1,2}(\O; R^2)$ satisfying

\be\label{1.35}
\mbox{div}\mathbf{u}=f\ \ \mbox{in}\ \ \O_\e,\ \ \|\mathbf{u}\|_{W_0^{1,2}(\O; R^2)}\leq C\|\tilde{f}\|_{L^{2}(\O; R^2)}=C\|f\|_{L^{2}(\O_\e; R^2)}
\ee
where $C$ depends only on $\O$. Furthermore, by (\ref{1.34}) we have
\be\label{1.36}
\mbox{div}\mathbf{u}=\tilde{f}=0\ \ \mbox{in}\ \ T_{\e, k}.
\ee
Applying the Restriction operator constructed in (\ref{1.30})-(\ref{1.33}), we obtain
 \be\label{1.37}
\mbox{div}\mathbf{v_{\e,k}}=\mbox{div}\mathbf{u}=\tilde{f}=0\ \ \mbox{on}\ \ E_{\e, k},
\ee
whenever $\textbf{u}$ satisfies (\ref{1.35}). Moreover, we have ${R_{\e}(\mathbf{u})}=\mathbf{u}$ in $ \Omega \setminus \bigcup_{k\in K_\e}E_{\e,k}$. Combined with (\ref{1.30}) and (\ref{1.37}), we conclude that
\be\label{1.38}
\mbox{div}R_\e(\mathbf{u})=f \ \ \mbox{in}\ \ \O_\e.
\ee
Now we define
\be\label{1.39}
\mathcal{B}_\e(f):={R_{\e}(\mathbf{u})}={R_{\e}(\mathcal{B}(\tilde{f}))}.
\ee
where $\mathcal{B}$ is the classical Bogovskii's operator on $\O$. It's easy to check that
\be\label{1.40}
\|\mathcal{B}_\e(f)\|_{W_0^{1,2}(\O_\e; R^2)}:=\|{R_{\e}(\mathbf{u})}\|_{W_0^{1,2}(\O_\e; R^2)}\leq C \|\textbf{u}\|_{W_0^{1,2}(\O; R^2)}=C\|\mathcal{B}(f)\|_{W_0^{1,2}(\O; R^2)}\leq C\|\tilde{f}\|_{L^{2}(\O)}=C\|f\|_{L^{2}(\O_\e)}.
\ee
Thus we proved Proposition 1.7.
\end{proof}
\section{Proof of Theorem 1.6}
This section deals with the proof of Theorem 1.6. By Lemma \ref{Lemma 1.4},  let us stress that the solution $[\vr_\e,\vu_\e]$ is also a renormalized weak solution as follows.
\begin{lemma}\label{Lemma 2.1}
We have
\ba
\dive\big(b(\tilde\vr_\e)\tilde\vu_\e\big)+\big(\tilde\vr_\e b'(\tilde\vr_\e) -b(\tilde\vr_\e)\big)\dive \tilde\vu_\e=0, \quad \mbox{in}\ \mathcal{D}'(R^2),\nn
\ea
for any $b\in C^0([0,\infty))\cap C^1((0,\infty))$ satisfying \eqref{1.18}-\eqref{1.19}, where $[\tilde \vr_\e, \tilde \vu_\e]$ denotes the functions $[\vr_\e, \vu_\e]$ extended
to be zero outside $\O_\e$.
\end{lemma}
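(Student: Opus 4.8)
The plan is to deduce the statement from Lemma \ref{Lemma 1.4} applied on the fixed domain $\O$ (not on $\O_\e$), with the pair $r := \tilde\vr_\e$ and $\vv := \tilde\vu_\e$. First I would check the hypotheses of Lemma \ref{Lemma 1.4}: by Theorem \ref{Theorem 1.6} we have $\vu_\e \in W_0^{1,2}(\O_\e;R^2)$, so its zero-extension $\tilde\vu_\e$ belongs to $W_0^{1,2}(\O;R^2)$ (extension by zero across the holes and across $\d\O$ preserves the $W^{1,2}_0$-class, since $\vu_\e$ vanishes on all of $\d\O_\e$); similarly $\vr_\e \in L^{2\g}(\O_\e)$ with $2\g \ge 2$ gives $\tilde\vr_\e \in L^{2\g}(\O) \subset L^2(\O)$, so we may take $\beta = 2\g$ in Lemma \ref{Lemma 1.4}. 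It remains to verify that the continuity equation \eqref{1.20} holds for the extended pair on the full domain, i.e. $\dive(\tilde\vr_\e \tilde\vu_\e) = 0$ in $\mathcal D'(R^2)$.

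The key step is this last point. For any $\psi \in C_c^\infty(R^2)$, I want to show $\int_{R^2} \tilde\vr_\e \tilde\vu_\e \cdot \nabla\psi \, \dx = 0$. The integrand is supported in $\O_\e$ (both functions vanish outside), so the integral reduces to $\int_{\O_\e} \vr_\e \vu_\e \cdot \nabla\psi\,\dx$. The obstacle is that $\psi$ restricted to $\overline\O_\e$ is an admissible test function for \eqref{1.14} (it lies in $C^\infty(\overline\O_\e)$), so this integral vanishes by the weak formulation of the continuity equation \eqref{1.14}. Hence $\dive(\tilde\vr_\e\tilde\vu_\e)=0$ in $\mathcal D'(R^2)$, and in particular in $\mathcal D'(\O)$.

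With \eqref{1.20} verified on $\O$ for $(r,\vv) = (\tilde\vr_\e,\tilde\vu_\e)$, Lemma \ref{Lemma 1.4} applies verbatim (noting that extending $r$ and $\vv$ by zero outside $\O$ does nothing, as they are already zero near $\d\O$): for every $b \in C^0([0,\infty)) \cap C^1((0,\infty))$ satisfying \eqref{1.18}--\eqref{1.19} with $\beta(\g) = 2\g$ we obtain
\[
\dive\big(b(\tilde\vr_\e)\tilde\vu_\e\big) + \big(\tilde\vr_\e b'(\tilde\vr_\e) - b(\tilde\vr_\e)\big)\dive\tilde\vu_\e = 0 \quad \text{in } \mathcal D'(R^2),
\]
which is exactly the claim. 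The only mild subtlety worth a sentence in the write-up is the admissibility of the constant $\beta(\g) = 2\g$ in the constraint $-1 < \lambda_1 \le \frac{\beta(\g)}{2} - 1 = \g - 1$ from \eqref{1.19}; since $\g > 1$ this interval is nonempty, so the class of renormalizing functions is nontrivial. I do not expect any serious difficulty here — the lemma is essentially a bookkeeping step that upgrades the weak continuity equation on the perforated domain to the renormalized identity on all of $R^2$, which is what the compactness arguments in the later sections require.
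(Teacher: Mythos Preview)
Your proposal is correct and matches the paper's approach: the paper simply says ``By Lemma~\ref{Lemma 1.4}'' and states Lemma~2.1 as an immediate consequence, and you have filled in exactly the verification of hypotheses (zero-extension lies in $W_0^{1,2}(\O)\times L^{2\g}(\O)$, and the weak continuity equation \eqref{1.14} on $\O_\e$ upgrades to $\dive(\tilde\vr_\e\tilde\vu_\e)=0$ in $\mathcal D'(\O)$ since test functions in $C^\infty(\overline{\O_\e})$ are admissible). One correction: do not cite Theorem~\ref{Theorem 1.6} for the regularity $\vu_\e\in W_0^{1,2}(\O_\e)$ and $\vr_\e\in L^{2\g}(\O_\e)$, since Lemma~2.1 is used in the proof of that theorem and the reference would be circular; the facts you need come instead from Definition~\ref{Definition 1.1} and Remark~\ref{Remark 1.5} (Lions' existence result for each fixed $\e$).
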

\subsection{Uniform bounds}
\label{Uniform bounds}

As shown in Lions (\cite{Lions-C}, Chapter 6) with $N=2, \g>1$, we have the existence of solution $[\vr_\e,\vu_\e] \in [L^{2\g}(\O_\e)]\times[H_0^1(\O_\e)]^2$ for any fixed $\e$. However, the classical estimates of their norms depend on the Lipschitz character of domain $\O_\e$, which goes to infinity as $\e\rightarrow 0$. In order to show the uniform estimates \eqref{1.22}, we employ the uniform Bogovskii type operator $\CalB_\e$  constructed in Section \ref{Preliminaries} to establish the independence of $\e\in(0,1)$ in \eqref{1.22}.

By the  Korn's inequality and the H\"older's inequality, the energy inequality \eqref{1.16} implies
\ba\label{2.1}
 \|\nabla  \vu_\e \|^2_{L^2(\O_\e; R^{2\times2})} &\leq C\, \Big(\|\vf\|_{L^\infty(\O_\e; R^2)} \|\vr_\e \|_{L^{s}(\O_\e)} \|\vu_\e \|_{L^t(\O_\e; R^2)}\\
 &\qquad+\|\vg\|_{L^\infty(\O_\e; R^2)}\|\vu_\e \|_{L^1(\O_\e; R^2)}\Big),
\ea
where $\frac{1}{s}+\frac{1}{t}=1$ and $s\rightarrow 1+, \ t\rightarrow \infty-$.
\begin{remark}
In Section \ref{Uniform bounds} and Section \ref{Equations in a fixed domain}, we frequently use $\infty-$ to denote a bounded real number arbitrarily close to positive infinity and $x+(x-)$ denotes the real number arbitrarily close to $x$ on the right (left) side.
\end{remark}
Since $\vu_\e\in W_0^{1,2}(\O_\e; R^2)$ has zero trace on the boundary, the Sobolev embedding  and the Poincar\'{e} inequality imply
\be\label{2.2}
\|\vu_\e\|_{L^t(\O_\e; R^2)}\leq\|\vu_\e\|_{W^{1,2}_{0}(\O_\e; R^2)}\leq C\, \ \|\nabla \vu_\e\|_{L^2(\O_\e; R^{2\times2})},
\ee
\be\label{2.3}
\|\vu_\e\|_{L^1(\O_\e; R^2)}\leq\|\vu_\e\|_{W^{1,2}_{0}(\O_\e; R^2)}\leq C\, \ \|\nabla \vu_\e\|_{L^2(\O_\e; R^{2\times2})},
\ee
for some constant $C$ independent of the domain $\O_\e$.

By the above estimates \eqref{2.1}-\eqref{2.3}, we deduce
\ba\label{2.4}
\|\nabla  \vu_\e \|_{L^2(\O_\e; R^{2\times2})}+\|\vu_\e\|_{L^t(\O_\e; R^2)} &\leq C\, \left(\|\vf\|_{L^\infty(\O_\e; R^2)} \|\vr_\e \|_{L^{s}(\O_\e)} +\|\vg\|_{L^\infty(\O_\e; R^2)}\right) \\
&\leq C\, \left( \|\vr_\e \|_{L^{s}(\O_\e)} +1\right).
\ea

\smallskip

Let $\mathcal{B}_\e$ be the  operator introduced in Proposition \ref{Proposition 1.7}, we introduce the test function
\be\label{2.5}
\varphi:=\mathcal{B}_\e\left(\vr_\e^{\gamma}-\frac{1}{|\O_\e|}\intOe {\vr_\e^{\gamma}}\right).
\ee

By Remark \ref{Remark 1.5}, we notice that
\be\label{2.6}
\vr_\e \in L^{2\gamma}(\O_\e),\quad \vu_\e \in W_{0}^{1,2}(\O_\e; R^2), \ \ \mbox{for any fixed}\ \ \e.
\ee

Then by Proposition \ref{Proposition 1.7} and \eqref{2.5}, we have
\ba\label{2.7}
&\dive \varphi = \vr_\e^{\gamma}-\frac{1}{|\Omega_\e|}\intOe {\vr_\e^{\gamma}}\quad \mbox{in}\ \O_\e
\ea
and
\ba\label{2.8}
\|\varphi\|_{W^{1,2}_0(\O_\e; R^2)}
&\leq C\|\vr_\e^{\gamma}-\frac{1}{|\Omega_\e|}\intOe {\vr_\e^{\gamma}}\|_{L^{2}(\O_\e)}\\
&\leq C\, \left(\|\vr_\e^{\gamma}\|_{L^{2}(\O_\e)}+ \|\vr_\e^{\g} \|_{L^1(\O_\e)}\right) \\
&\leq C\, \|\vr_\e\|^{{\g}}_{L^{2\gamma}(\O_\e)}.
\ea

Taking $\vp$ as a test function in the weak formulation of the momentum equation \eqref{1.15} gives
\be\label{2.9}
\intOe{p(\vr_\e)\vr_\e^{\g}} = \sum_{ j=1}^4 I_j
\ee
with
\ba\label{Ij}
&I_1:=\intOe{p(\vr_\e)} \ \frac{1}{|\O_\e|}\intOe {\vr_\e^{\gamma}}, \\
&I_2:=\intOe{\mu \nabla \vu_\e:\nabla \varphi}+\intOe{\left(\frac{\mu}{3} + \eta \right) \dive \vu_\e :\dive \varphi},\\
&I_3:=-\intOe{\vr_\e \vu_\e\otimes\vu_\e:\nabla \varphi},\\
& I_4:=-\intOe{(\vr_\e \vf +\vg)\cdot\varphi}.\nn
\ea
For $I_1$:
\ba\label{2.10}
&I_1:=\intOe{p(\vr_\e)} \ \frac{1}{|\O_\e|}\intOe {\vr_\e^{\gamma}}=\frac{a}{|\O_\e|}\,\|\vr_\e\|_{L^{\g}(\O_\e)}^{2\g}\\
&\leq \frac{a}{|\O_\e|}\,\|\vr_\e\|_{L^{1}(\O_\e)}^{2\g\th_1} \|\vr_\e\|_{L^{2\g}(\O_\e)}^{2\g(1-\th_1)}=\frac{aM_\e^{2\g\th_1}}{|\O_\e|} \|\vr_\e\|_{L^{2\g}(\O_\e)}^{2\g(1-\th_1)}, 
\ea
where we used \eqref{1.12}, Young's inequality, and interpolations between Lebesgue spaces. $M_\e$ is the total mass and the number $\th_1$ satisfies
\ba\label{2.11}
0<\th_1<1\ \  \mbox{s.t.}\ \  \frac{1}{\g}=\frac{\th_1}{1}+\frac{1-\th_1}{2\g}.
\ea

For $I_2$:
\ba\label{2.12}
I_2 &\leq  C\, \|\nabla \vu_\e\|_{L^2(\O_\e; R^{2\times2})}\|\nabla\varphi\|_{L^2(\O_\e; R^{2\times2})}\leq C\, \left( \|\vr_\e \|_{L^{s}(\O_\e)} +1\right)\|\vr_\e\|_{L^{2\g}(\O_\e)}^{\g} \\
&\leq C\, \left( \|\vr_\e \|_{L^{1}(\O_\e)}^{\th_2}\|\vr_\e \|_{L^{2\g}(\O_\e)}^{1-\th_2} +1\right)\|\vr_\e\|_{L^{2\g}(\O_\e)}^{{\g}}\\
&\leq C\, \left( M_\e^{\th_2}\|\vr_\e \|_{L^{2\g}(\O_\e)}^{\g+1-\th_2}+\|\vr_\e \|_{L^{2\g}(\O_\e)}^{\g}  \right),
\ea
where we used \eqref{1.12}, \eqref{2.4} and \eqref{2.8}. The number $0<\th_2<1$ is determined by
\be\label{2.13}
 \frac{1}{s}=\frac{\th_2}{1}+\frac{1-\th_2}{2\g},\ \ \frac{1}{s}+\frac{1}{t}=1,\ \ s\rightarrow 1+, \ t\rightarrow \infty-
\ee
which implies $\th_2=\frac{\frac{1}{s}-\frac{1}{2\g}}{1-\frac{1}{2\g}}\rightarrow 1^(1-\th_2\rightarrow0+)$ for $s\rightarrow1+$ and any $\g\in(1,+\infty)$.
\par For $I_3$:
\ba\label{2.14}
I_3 &=-\intOe{\vr_\e \vu_\e\otimes\vu_\e:\nabla \varphi}\\
&\leq  C\, \|\vr_\e\|_{L^{s_{1}}(\O_\e)} \| \vu_\e\|_{L^{t_{1}}(\O_\e; R^2)}^2 \|\nabla\varphi\|_{L^{2}(\O_\e; R^{2\times2})}\\
&\leq C\, \|\vr_\e\|_{L^{1}(\O_\e)}^{\th_3}\|\vr_\e\|_{L^{2\g}(\O_\e)}^{1-\th_3}(\|\vr_\e\|_{L^{s}(\O_\e)}^{2}+1)\|\vr_\e\|_{L^{2\g}(\O_\e)}^{\g}\\
&\leq C\, \|\vr_\e\|_{L^{1}(\O_\e)}^{\th_3}\|\vr_\e\|_{L^{2\g}(\O_\e)}^{1-\th_3}(\|\vr_\e\|_{L^{1}(\O_\e)}^{2\th_2}\|\vr_\e\|_{L^{2\g}(\O_\e)}^{2(1-\th_2)}+1)\|\vr_\e\|_{L^{2\g}(\O_\e)}^{\g}\\
&\leq C\, M_{\e}^{\th_3+2\th_2}\|\vr_\e\|_{L^{2\g}(\O_\e)}^{\g+(1-\th_3)+2(1-\th_2)}+C\, M_{\e}^{\th_3}\|\vr_\e\|_{L^{2\g}(\O_\e)}^{\g+(1-\th_3)} ,
\ea
where the estimates are similar to $I_1,I_2 $ and
\be\label{2.15}
0<\th_2, \th_3<1 \quad  \mbox{s.t.}\ \ \frac{1}{s}=\frac{\th_2}{1}+\frac{1-\th_2}{2\g} \ \ \mbox{and} \ \ \ \frac{1}{s_1}=\frac{\th_3}{1}+\frac{1-\th_3}{2\g}.
\ee
This implies
\be\label{2.16}
\th_3=\frac{\frac{1}{s_1}-\frac{1}{2\g}}{1-\frac{1}{2\g}}\rightarrow 0+(1-\th_3\rightarrow1-)\ \ \mbox{as} \ \ s_1\rightarrow2+,\ \ \g\rightarrow1+.
\ee

For $I_4$:
\ba\label{2.17}
I_4&=-\int_{\O_\e}{(\vr_\e \vf +\vg)\cdot\varphi}\\
&\leq C\, \|\vr_\e\|_{L^{s}(\O_\e)}\|\vf\|_{L^\infty(\O_\e; R^2)}\| \vp\|_{L^t(\O_\e; R^2)}+\|\vf\|_{L^\infty(\O_\e; R^2)}\| \vp\|_{L^2(\O_\e; R^2)}\\
&\leq C\,(1+\|\vr_\e\|_{L^{s}(\O_\e)})\|\vp\|_{W^{1,2}(\O_\e; R^2)}\\
&\leq C(1+M_{\e}^{\th_2}\|\vr_\e \|_{L^{2\g}(\O_\e)}^{1-\th_2})\|\vr_\e \|_{L^{2\g}(\O_\e)}^{\g},
\ea
where $s,t$ and $\th_2$ are the same as above.
\\Summing up the estimates for $I_1$ to $I_4$ implies
\be\label{2.18}
\|\vr_\e\|_{L^{2\g}(\O_\e)}^{2\g}   \leq  C\, \left(1+\|\vr_\e\|_{L^{2\g}(\O_\e)}^{\b_1(\g)}\right),
\ee
where
\be
\b_1(\g)=\mbox{max}\{2\g(1-\th_1),\g+(1-\th_3),\g+(1-\th_2),\g,\g+(1-\th_3)+2(1-\th_2)\}.\nn
\ee
Once $\g>1$ fixed, we can always find $s\rightarrow1+$ smaller than $\g$ \ \mbox{s.t.} $1-\th_2\rightarrow0+$ (for any $\g\in(1,+\infty)$). At the same time, $1-\th_3$ goes down with the increase of $\g$ and there exists $s_1\rightarrow2+$ \mbox{s.t.} $1-\th_3\rightarrow1-$ (for any $\g\rightarrow1+$) to obtain $\b_1(\g)<2\g$ in \eqref{2.18}.
\par Then we deduce
\be\label{2.19}
\|\vr_\e\|_{L^{2\g}(\O_\e)}  \leq  C,\quad C \ \mbox{is independent of}\,\ \e.
\ee
Moreover, combined with \eqref{2.4}, we have
\be\label{2.20}
 \|\vu_\e\|_{W_0^{1,2}(\O_\e; R^2)} \leq C,\quad C \ \mbox{is independent of}\,\ \e.
\ee
Let $[\tilde \vr_\e,\tilde \vu_\e]$ be the zero extension of $[\vr_\e, \vu_\e]$ in $\O$. Then by \eqref{2.19} and \eqref{2.20} we have
\be\label{2.21}
\|\tilde \vr_\e\|_{L^{2\g}(\O)}+   \|\tilde \vu_\e\|_{W_0^{1,2}(\O; R^2)}\leq  C.
\ee
Thus, up to a substraction of subsequence,
\be\label{2.22}
\tilde \vr_\e  \to \vr \ \mbox{weakly in} \ L^{2\g}(\O),\quad \tilde \vu_\e \to \vu \   \mbox{weakly in}\  W_{0}^{1,2}(\O; R^2).
\ee

We obtained the uniform estimate \eqref{1.22} and the weak convergence in \eqref{1.23}.

\medskip

\subsection{Equations in homogeneous domain}\label{Equations in a fixed domain}
Now, we show that the couple $[\tilde \vr_\e,\tilde \vu_\e]$ solves the momentum equations as \eqref{1.6} in $\O$ up to a small remainder.
\begin{lemma}\label{Lemma 2.3}
Under the assumptions in {\rm Theorem \ref{Theorem 1.6}},  there holds
 \be\label{2.23}
\dive(\tilde\vr_\e\tilde \vu_\e\otimes\tilde \vu_\e)+\nabla p(\tilde\vr_\e)=\dive\SSS(\nabla \tilde\vu_\e)+\tilde\vr_\e \vf + \vg+{\rm \mathbf{r}}_\e,\quad \mbox{in}\ \mathcal{D}'(\O; R^2),
 \ee
where the distribution ${\rm \mathbf{r}}_\e$ is small and satisfies:
\be\label{2.24}
|\langle \mathbf{r}_\e,\varphi\rangle_{\mathcal{D}'(\O; R^2),\mathcal{D}(\O; R^2)}|\leq C \e^{\de_1}(\|\nabla\varphi\|_{L^{t}(\O_\e; R^{2\times2})}+\|\varphi\|_{L^{\infty}(\O_\e; R^{2})}),
\ee
for any $\varphi\in C_c^{\infty}(\O;R^2)$ and $\de_1:=\frac{1}{2}\alpha-1>0, t\rightarrow 2+$, bounded constant $C>0$.
\end{lemma}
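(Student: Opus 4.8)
\emph{Strategy.} The momentum identity \eqref{1.15} for $[\vr_\e,\vu_\e]$ is only at our disposal against test functions vanishing on the holes, whereas \eqref{2.23} is required against every $\varphi\in C_c^\infty(\O;R^2)$. I would therefore \emph{correct} a given $\varphi$ by multiplying it by a cut-off $g_\e$ that equals $1$ away from the holes and vanishes on each $\bar B(x_{\e,k},b_0a_\e)\supset\bar T_{\e,k}$, so that $g_\e\varphi$ is compactly supported in $\O_\e$ and lies in $W_0^{1,2}(\O_\e;R^2)$; it is then an admissible test function in \eqref{1.15} (all integrands there belong to $L^2(\O_\e)$, so \eqref{1.15} extends from $C_c^\infty$ to $W_0^{1,2}$ by density, or one mollifies $g_\e$). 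Expanding $\nabla(g_\e\varphi)=g_\e\nabla\varphi+\varphi\otimes\nabla g_\e$ and $\dive(g_\e\varphi)=g_\e\dive\varphi+\varphi\cdot\nabla g_\e$, writing $g_\e=1-(1-g_\e)$ in the terms carrying $g_\e\nabla\varphi$, and replacing the $\O_\e$-integrals of the resulting full integrands by $\O$-integrals of their zero-extensions, the identity \eqref{1.15} with test function $g_\e\varphi$ turns into the weak form of \eqref{2.23} with remainder $\langle\mathbf r_\e,\varphi\rangle=B_\e-A_\e$, where
\[
A_\e:=\int_{\O_\e}(1-g_\e)\big(\vr_\e\vu_\e\otimes\vu_\e:\nabla\varphi+p(\vr_\e)\dive\varphi-\SSS(\nabla\vu_\e):\nabla\varphi+(\vr_\e\vf+\vg)\cdot\varphi\big)\,\dx,
\]
\[
B_\e:=\int_{\O_\e}\big(\vr_\e\vu_\e\otimes\vu_\e:(\varphi\otimes\nabla g_\e)+p(\vr_\e)\,\varphi\cdot\nabla g_\e-\SSS(\nabla\vu_\e):(\varphi\otimes\nabla g_\e)\big)\,\dx.
\]

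\emph{Choice of cut-off and its Dirichlet energy.} I would take the logarithmic cut-off with inner radius $b_0a_\e$ and outer radius $r_\e:=\sqrt{a_\e}$; since $a_\e=e^{-\s\e^{-\a}}$ decays faster than any power of $\e$ we have $b_0a_\e<r_\e<b_1\e$ for small $\e$, so
\[
g_\e(x):=\min\Big\{1,\ \max\Big\{0,\ \tfrac{\log(|x-x_{\e,k}|/(b_0a_\e))}{\log(r_\e/(b_0a_\e))}\Big\}\Big\}\ \text{ on }\e C_k,\ k\in K_\e,\qquad g_\e\equiv1\ \text{ on }\O\setminus\bigcup_{k\in K_\e}B(x_{\e,k},r_\e),
\]
is well defined. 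The balls $B(x_{\e,k},r_\e)$ lie in the pairwise disjoint cells $\e C_k$ and $\log(r_\e/(b_0a_\e))=\tfrac12\s\e^{-\a}-\log b_0\ge c\,\e^{-\a}$; the Dirichlet energy of a radial logarithm on an annulus equals $2\pi/\log(r_\e/(b_0a_\e))$, so with $|K_\e|\le C\e^{-2}$ one gets the key estimate
\[
\|\nabla g_\e\|_{L^2(\O)}^2=\sum_{k\in K_\e}\frac{2\pi}{\log(r_\e/(b_0a_\e))}\le C\,\e^{-2}\e^{\a}=C\,\e^{\a-2},\qquad\text{hence}\quad\|\nabla g_\e\|_{L^2(\O)}\le C\,\e^{\de_1},\ \ \de_1=\tfrac12\a-1>0.
\]
Moreover $\supp(1-g_\e)$ is contained in $\bigcup_{k\in K_\e}B(x_{\e,k},r_\e)$, of measure $\le C|K_\e|r_\e^2\le C\e^{-2}a_\e$, so for each fixed $m\in[1,\infty)$ the quantity $\|1-g_\e\|_{L^m(\O)}\le(C\e^{-2}a_\e)^{1/m}$ tends to $0$ faster than any power of $\e$.

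\emph{Estimates.} From the uniform bound \eqref{2.21}, the state law \eqref{1.9}, \eqref{1.7}, and the two-dimensional embedding $W_0^{1,2}(\O_\e)\hookrightarrow L^q(\O_\e)$ for every $q<\infty$ (with $\e$-independent constant), I would record the uniform bounds $\|\vr_\e\vu_\e\otimes\vu_\e\|_{L^2(\O_\e)}\le\|\vr_\e\|_{L^{2\g}(\O_\e)}\|\vu_\e\|_{L^{4\g/(\g-1)}(\O_\e)}^2\le C$ and $\|p(\vr_\e)\|_{L^2(\O_\e)}+\|\SSS(\nabla\vu_\e)\|_{L^2(\O_\e)}\le C$. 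In $B_\e$ every term pairs such an $L^2$-function with $\varphi$ times $\nabla g_\e$, so H\"older and the key estimate give $|B_\e|\le C\|\varphi\|_{L^\infty(\O_\e)}\|\nabla g_\e\|_{L^2(\O)}\le C\e^{\de_1}\|\varphi\|_{L^\infty(\O_\e)}$. In $A_\e$ every term pairs such an $L^2$-function (or an $L^{2\g}$-function, for $\vr_\e\vf+\vg$) with $(1-g_\e)$ and with $\nabla\varphi$ or $\varphi$; for any $t>2$ I would pick $m=m(t)$ with $\tfrac12+\tfrac1m+\tfrac1t=1$ and use that $\|1-g_\e\|_{L^m(\O)}$ beats $\e^{\de_1}$, obtaining $|A_\e|\le C\|1-g_\e\|_{L^m(\O)}\big(\|\nabla\varphi\|_{L^t(\O_\e)}+\|\varphi\|_{L^\infty(\O_\e)}\big)\le C\e^{\de_1}\big(\|\nabla\varphi\|_{L^t(\O_\e)}+\|\varphi\|_{L^\infty(\O_\e)}\big)$ for small $\e$. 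Combining the two bounds in $\langle\mathbf r_\e,\varphi\rangle=B_\e-A_\e$ yields \eqref{2.24}, with $t\to2+$.

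\emph{Main difficulty.} The only substantive point is the capacitary estimate $\|\nabla g_\e\|_{L^2(\O)}\le C\e^{\de_1}$: in two space dimensions a tiny ball has only logarithmically small capacity, which forces the super-exponentially small hole size $a_\e=e^{-\s\e^{-\a}}$, and the hypothesis $\a>2$ is precisely what makes $|K_\e|/\log(1/a_\e)=O(\e^{\a-2})\to0$, so that $\de_1=\tfrac12\a-1>0$. Everything else reduces to H\"older's inequality together with the uniform estimates \eqref{2.21} and the logarithmic Sobolev embedding in 2D.
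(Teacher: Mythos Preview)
Your proof is correct and follows essentially the same route as the paper. The paper introduces a cut-off $g_\e$ with the \emph{opposite} convention ($g_\e=1$ on the holes, $g_\e=0$ outside small balls) and uses $(1-g_\e)\varphi$ as the admissible test function, then groups the remainder into four pieces $I_{1,\e},\dots,I_{4,\e}$ rather than your $A_\e,B_\e$; the outer radius of the annulus in the paper is the polynomial scale $b_0\e^\a$ rather than your $\sqrt{a_\e}$, but either choice yields $\log(\text{ratio})\sim c\,\e^{-\a}$ and hence the identical capacitary bound $\|\nabla g_\e\|_{L^2(\O)}^2\le C\e^{\a-2}$, which is the heart of the argument in both versions. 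One cosmetic point: when you pass from $\O_\e$-integrals to $\O$-integrals of zero-extensions, the term $\vg\cdot\varphi$ is not zero on the holes (since $\vg$ lives on all of $\O$), so strictly your $\langle\mathbf r_\e,\varphi\rangle$ differs from $B_\e-A_\e$ by $\int_{\cup T_{\e,k}}\vg\cdot\varphi\,\dx$, but this is bounded by $C\e^{-2}a_\e^2\|\varphi\|_{L^\infty}$ and is absorbed harmlessly into the estimate.
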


\begin{proof}[\bf Proof of Lemma \ref{Lemma 2.3}]

By the assumptions on the distribution and the size of the holes in \eqref{1.2}, there exists cut-off function $g_\e\in C_c^\infty(\O)$ satisfying $0\leq g_\e \leq 1$ and
\be\label{2.25}
g_\e =1 \ \mbox{on}\ \bigcup_{k\in K_\e}  T_{\e,k},\quad g_\e=0 \  \mbox{in} \ \O\setminus \bigcup_{k\in K_\e} \overline{B(x_{\e,k},b_0\e^\a)},\quad \|g_\e\|_{W^{1,2}(\O; R^2)}^{2}\leq 2C\e^{-2}|loga_\e|^{-1}.
\ee

Then for any $\varphi\in C_c^\infty (\O; R^2)$, we have
\ba\label{2.16}
I_\e&=\intO{\tilde\vr_\e\tilde \vu_\e\otimes\tilde \vu_\e:\nabla \varphi+ p(\tilde\vr_\e) \,\dive \varphi-\SSS(\nabla \tilde\vu_\e):\nabla \varphi + \tilde\vr_\e  \vf \cdot \varphi + \vg \cdot \varphi}\\
&=\int_{\O_\e}\Big(\tilde\vr_\e\tilde \vu_\e\otimes\tilde \vu_\e:\nabla ((1-g_\e)\varphi)+ p(\tilde\vr_\e)\, \dive ((1-g_\e)\varphi)-\SSS(\nabla \tilde\vu_\e):\nabla ((1-g_\e)\varphi) \\
&\quad+ \tilde\vr_\e \vf \cdot ((1-g_\e)\varphi) + \vg \cdot ((1-g_\e)\varphi)\Big)\,\dx + I_\e,\\ \nn
\ea
where we used the fact that $(1-g_\e)\varphi \in C_c^\infty (\O_\e; R^2)$ is a good test function for the momentum equations \eqref{1.6} in $\O_\e$, and the quantity $I_\e$ is of the form
\ba\label{26}
&I_\e:=\sum_{j=1}^4 I_{j,\e},
\ea
with
\ba
&I_{1,\e}:=  \intO{ \tilde \vr_\e \tilde \vu_\e\otimes \tilde \vu_\e:(g_\e\nabla \varphi) +\tilde \vr_\e \tilde \vu_\e\otimes \tilde \vu_\e:(\nabla g_\e\otimes  \varphi) },\\
&I_{2,\e}:= \intO{  p(\tilde \vr_\e)g_\e\dive \varphi+p(\tilde \vr_\e) \nabla g_\e\cdot \varphi},\\
&I_{3,\e}:= -\intO{ \SSS(\nabla \tilde \vu_\e):(g_\e\nabla \varphi) +\SSS(\nabla \tilde \vu_\e):(\nabla g_\e\otimes  \varphi)},\\
&I_{4,\e}:= \intO{  \tilde \vr_\e  \vf \cdot g_\e\varphi +\vg \cdot g_\e \varphi}.\nn
\ea
We now estimate $I_{j,\e}$ ($j=1,2,3,4$) one by one. For $I_{1,\e}$, direct calculation gives
\ba\label{2.27}
|I_{1,\e}|: &=|\intO{ \tilde \vr_\e \tilde \vu_\e\otimes \tilde \vu_\e:(g_\e\nabla \varphi) +\tilde \vr_\e \tilde \vu_\e\otimes \tilde \vu_\e:(\nabla g_\e\otimes  \varphi) }|\\
 &\leq  C\, \|\tilde\vr_\e\|_{L^{2\g}(\O;R)} \| \tilde\vu_\e\|_{L^{s'}(\O; R^2)}^2 \left( \|\nabla g_\e\|_{L^{2}(\O; R^2)}\|\varphi\|_{L^{t'}(\O; R^2)}+ \|g_\e\|_{L^{t'}(\O; R)}\|\nabla\varphi\|_{L^{2}(\O; R^{2\times2})}\right)\\
&\leq  C\, \|\nabla g_\e\|_{L^{2}(\O; R^2)}\|\nabla\varphi\|_{L^{2}(\O; R^{2\times2})}
\leq C \e^{\frac{1}{2}\alpha-1}\|\nabla\varphi\|_{L^{2}(\O; (R^{2\times2})}
\leq C \e^{\frac{1}{2}\alpha-1}\|\nabla\varphi\|_{L^{t}(\O; R^{2\times2})},
\ea
where
\be
\alpha>2,\quad \frac{2}{s'}+\frac{1}{t'}=(\frac{1}{2}-\frac{1}{2\g}),\quad 1<\g<\infty, \quad \mbox{and} \quad s',\ t'\rightarrow\infty-,\ t\rightarrow2+ \quad \mbox{as} \quad\g\rightarrow1+.\nn
\ee

\medskip

For $I_{2,\e}$ and $I_{3,\e}$, similar to the estimate for $I_{1,\e}$, we have
\ba\label{2.28}
|I_{2,\e}|:&= |\intO{  p(\tilde \vr_\e)g_\e\dive \varphi+p(\tilde \vr_\e) \nabla g_\e\cdot \varphi}|\\
 &\leq  C\, a\|\tilde \vr_\e\|_{L^{2\g}(\O;R)}^{\g}\left( \|g_\e\|_{L^{s}(\O; R)}\|\nabla\varphi\|_{L^{t}(\O; R^{2\times2})}+\|\nabla g_\e\|_{L^{2}(\O; R^2)}\|\varphi\|_{L^{\infty}(\O; R^2)}\right)\\
&\leq C \e^{\frac{1}{2}\alpha-1}(\|\nabla\varphi\|_{L^{t}(\O; R^{2\times2})}+\|\varphi\|_{L^{\infty}(\O; R^{2})}).
\ea

\ba\label{2.29}
|I_{3,\e}|:&=| -\intO{ \SSS(\nabla \tilde \vu_\e):(g_\e\nabla \varphi) +\SSS(\nabla \tilde \vu_\e):(\nabla g_\e\otimes  \varphi)}|\\
 &\leq C\, \|\nabla \tilde \vu_\e\|_{L^{2}(\O;R^{2\times2})}\left(\|g_\e\|_{L^{s}(\O; R)}\|\nabla\varphi\|_{L^{t}(\O; R^{2\times2})}+ \|\nabla g_\e\|_{L^{2}(\O; R^2)}\|\varphi\|_{L^{\infty}(\O; R^2)}\right)\\
 &\leq C \e^{\frac{1}{2}\alpha-1}(\|\nabla\varphi\|_{L^{t}(\O; R^{2\times2})}+\|\varphi\|_{L^{\infty}(\O; R^{2})}).
\ea
where
\be\label{2.30}
\alpha>2,\quad 1<\g<\infty, \quad \frac{1}{s}+\frac{1}{t}=\frac{1}{2}, \quad \mbox{and} \quad s\rightarrow\infty-, \ t\rightarrow2+.
\ee
For $I_{4,\e}$, the similar argument gives the following analogous estimate:
\ba\label{2.31}
|I_{4,\e}|:&= |\intO{  \tilde \vr_\e  \vf \cdot g_\e\varphi +\vg \cdot g_\e \varphi}|\\
&\leq C\, \|\varphi\|_{L^{\infty}(\O;R^2)}\left( \|\tilde\vr_\e\|_{L^{2\g}(\O;R)}\|\vf\|_{L^{\infty}(\O; R^2)}\| g_\e\|_{L^{(1-\frac{1}{2\g})^{-1}}(\O;R)}+\|\vg\|_{L^{\infty}(\O; R^2)}\|g_\e\|_{L^{1}(\O; R)}\right)\\
&\leq C\, \|\varphi\|_{L^{\infty}(\O;R^2)}\left( \|\tilde\vr_\e\|_{L^{2\g}(\O;R)}\|\vf\|_{L^{\infty}(\O; R^2)}\| g_\e\|_{L^{2}(\O; R)}+\|\vg\|_{L^{\infty}(\O; R^2)}\|g_\e\|_{L^{2}(\O; R)}\right)\\
&\leq C\, \|g_\e\|_{L^{2}(\O; R)}\|\varphi\|_{L^{\infty}(\O;R^2)}\leq C \e^{\frac{1}{2}\alpha-1}\|\varphi\|_{L^{\infty}(\O; R^{2})}.
\ea
\medskip
Summing up the estimates in \eqref{2.27},  \eqref{2.28}, \eqref{2.29} and \eqref{2.31}, we finally obtain
\be\label{2.32}
I_\e:=\sum_{j=1}^4 I_{j,\e} \leq C \e^{\frac{1}{2}\alpha-1}(\|\varphi\|_{L^{\infty}(\O; R^{2})}+\|\nabla\varphi\|_{L^{t}(\O; R^{2\times2})}),
\ee
where
\be\label{2.33}
 \de_1:=\frac{1}{2}\alpha-1>0,\quad t\rightarrow 2+.
\ee
Thus we completed the proof of Lemma \ref{Lemma 2.3}.
\end{proof}

\subsection{The limit equations}
In this section, we deduce the limit equation for the couple  $[\vr,\vu]$ obtained in \eqref{2.22} which represent a finite energy renormalized weak solution of \eqref{1.5} to \eqref{1.8} in $\O$. First of all, from \eqref{2.22}, we have the following convergence
\be\label{2.34}
\tilde \vr_\e  \to \vr \ \mbox{weakly in} \ L^{2\g}(\O),\quad \tilde \vu_\e \to \vu \   \mbox{weakly in}\  W_{0}^{1,2}(\O; R^2).
\ee
Applying  compact Sobolev embedding, we have
\be\label{2.35}
\tilde\vu_\e \to \vu \quad \mbox{strongly in}\quad L^q(\Omega; R^2) \quad \mbox{for any $1\leq q<\infty$}.
\ee
Then, the following weak convergence of nonlinear terms holds
\ba\label{2.36}
& \tilde\vr_\e \tilde\vu_\e \to \vr\vu \quad &&\mbox{weakly in}\quad L^q(\O; R^2) \quad \mbox{for any}\ \ 1< q<2\g,\\
& \tilde\vr_\e \tilde\vu_\e\otimes\tilde\vu \to \vr\vu\otimes\vu \quad &&\mbox{weakly in}\quad L^q(\O; R^{2\times2}) \quad \mbox{for any $1< q<2\g$}.
\ea

Then in Lemma 2.1 and \eqref{2.23}, passing with  $\e\to 0$ gives
\ba\label{2.37}
&\dive (\vr \vu) = 0,\\
&\dive (\vr \vu \otimes \vu) +\nabla  \overline {p(\vr)} =\dive \SSS(\nabla \vu)+\vr \vf+ \vg,
\ea
in the sense of distribution in ${\mathcal D}'(\O)$, where $\overline {p(\vr)}$ is the weak limit of $p(\tilde \vr_\e)$ in $L^{2}(\O)$. Furthermore, by Lemma \ref{Lemma 1.4}, $[\vr,\vu]$ satisfies the renomalized equation
\be\label{2.38}
\dive \big( b(\vr)\vu \big) + \big( \vr b'(\vr) - b(\vr) \big) \dive \vu = 0, \quad \mbox{in}\ \mathcal{D}'( R^2),
\ee
where $b\in C^0([0,\infty))\cap C^1((0,\infty))$ satisfies \eqref{1.18}-\eqref{1.19}. To finish the proof of Theorem \ref{Theorem 1.6}, it suffices to show $\overline {p(\vr)} = p(\vr)$. Thus we obtain  in the following section.

\smallskip

\subsection{Convergence of pressure term - end of the proof}\label{Convergence of pressure term - end of the proof}

Here, we introduce $p(\vr)-(\frac{4\mu}{3}+\eta)\dive \vu$ called effective viscous flux, which possesses some weak compactness property specified in the following lemma, which take up great significance in the existence theory of weak solutions for the compressible Navier-Stokes equations.

\begin{lemma}\label{Lemma 2.4}
Up to a substraction of subsequence, there holds for any $\psi\in C_c^\infty(\Omega)$:
\be\label{2.39}
\lim_{\e\to 0}\intO{\psi \left(p(\tilde\vr_\e)-(\frac{4\mu}{3}+\eta)\dive \tilde\vu_\e\right)\tilde\vr_\e}=\intO{\psi\left(\overline{p(\vr)}-(\frac{4\mu}{3}+\eta)\dive \vu\right)\vr}.
\ee
\end{lemma}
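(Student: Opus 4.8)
The plan is to follow the classical Lions--Feireisl argument for the effective viscous flux identity, adapted to the perforated setting via the small-remainder equation~\eqref{2.23} of Lemma~\ref{Lemma 2.3}. The key idea is to test the momentum equation on $\O$ against a test function built from the operator $\CalA = \nabla \Delta^{-1}$ (the inverse divergence in Fourier variables), localized by the cut-off $\psi$. Concretely, set $\varphi_\e := \psi\, \CalA[\tilde\vr_\e] = \psi\, \nabla \Delta^{-1}[\tilde\vr_\e]$ and $\varphi := \psi\, \CalA[\vr]$. Since $\tilde\vr_\e \to \vr$ weakly in $L^{2\g}(\O)$, the operator $\CalA$ (of order $-1$, Calder\'on--Zygmund theory) gives $\CalA[\tilde\vr_\e] \to \CalA[\vr]$ strongly in, say, $W^{1,2\g}_{loc}$ hence in any $L^q_{loc}$, and $\partial_j\CalA_i[\tilde\vr_\e] = R_iR_j[\tilde\vr_\e]$ (Riesz transforms) converges weakly in $L^{2\g}$.

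First I would write the weak formulation~\eqref{2.23} with test function $\varphi_\e$ and the limit equation~\eqref{2.37} (second line) with test function $\varphi$, and subtract, after passing to the limit $\e\to0$ in the equation containing $\varphi_\e$. The crucial point is that the remainder term $\langle \mathbf{r}_\e, \varphi_\e\rangle$ vanishes: by~\eqref{2.24} it is bounded by $C\e^{\de_1}(\|\nabla\varphi_\e\|_{L^t(\O_\e)} + \|\varphi_\e\|_{L^\infty(\O_\e)})$, and since $\CalA$ maps $L^{2\g}$ boundedly into $W^{1,2\g} \hookrightarrow C^{0,\theta}$ in 2D (for $2\g>2$), the norms of $\varphi_\e$ are bounded uniformly in $\e$, so the product is $O(\e^{\de_1}) \to 0$. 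This is the place where the perforation enters, and it is clean precisely because $\delta_1 = \tfrac12\alpha - 1 > 0$. The remaining terms are handled as in the fixed-domain theory: the convective term $\tilde\vr_\e\tilde\vu_\e\otimes\tilde\vu_\e : \nabla\varphi_\e$ passes to the limit using the strong convergence~\eqref{2.35} of $\tilde\vu_\e$ in every $L^q$ together with the weak convergence of $\tilde\vr_\e\tilde\vu_\e$; the linear terms involving $\tilde\vr_\e$ and $\nabla\tilde\vu_\e$ tested against $\varphi_\e$ combine, after integration by parts and use of the commutator structure of Riesz transforms, into the product $p(\tilde\vr_\e)\tilde\vr_\e - (\tfrac{4\mu}{3}+\eta)\dive\tilde\vu_\e\,\tilde\vr_\e$ up to terms that converge by compactness.

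The main obstacle is the standard one in this circle of ideas: the two ``bad'' products $\overline{p(\vr)}\,\vr$ (resp.\ $\overline{p(\vr)\vr}$) and $\dive\vu\,\vr$ (resp.\ $\overline{\dive\vu\,\vr}$) are each limits of products of two only-weakly-converging sequences, and what the test-function computation actually yields is an identity relating the {\em difference} $p(\tilde\vr_\e)\tilde\vr_\e - (\tfrac{4\mu}{3}+\eta)(\dive\tilde\vu_\e)\tilde\vr_\e$ tested against $\psi$ to a combination of commutators of the form $\tilde\vr_\e\,[\psi R_iR_j]\tilde\vu_\e^{(j)} - \tilde\vu_\e^{(i)}\,[\psi R_iR_j]\tilde\vr_\e$ (and lower-order pieces containing $\nabla\psi$). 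The heart of the proof is showing these commutator terms converge to the analogous expression with $[\vr,\vu]$; this is exactly the classical div-curl / compensated-compactness lemma (Feireisl--Novotn\'y, or Lions' original argument): one needs that $\tilde\vu_\e \rightharpoonup \vu$ in $W^{1,2}$ with $\dive(\tilde\vr_\e\tilde\vu_\e)=0$ (renormalized, Lemma~\ref{Lemma 2.1}), so that the relevant pair $(\tilde\vr_\e\tilde\vu_\e,\ \tilde\vu_\e)$ satisfies the div-curl structure. I would invoke this lemma (citing~\cite{Lions-C} or~\cite{N-book}) rather than reprove it, and then let $\e\to0$ in every term; collecting the limits gives precisely~\eqref{2.39}. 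A minor technical care is needed to justify $\CalA[\tilde\vr_\e]$ as an admissible (compactly supported, smooth enough) test function for~\eqref{2.23}, which I would do by a routine density/mollification argument since $\psi\in C_c^\infty(\O)$ forces $\varphi_\e$ to have compact support in $\O$ and the equation~\eqref{2.23} holds in $\mathcal{D}'(\O;R^2)$ against $W^{1,t'}$ test functions by the bound~\eqref{2.24}.
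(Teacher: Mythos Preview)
Your proposal is correct and follows essentially the same route as the paper: test \eqref{2.23} against $\psi\,\nabla\Delta^{-1}[\tilde\vr_\e]$ and \eqref{2.37} against $\psi\,\nabla\Delta^{-1}[\vr]$, kill the remainder $\mathbf{r}_\e$ via the $W^{1,2\g}\hookrightarrow L^\infty$ bound combined with \eqref{2.24}, and reduce the surviving convective-term difference to a Riesz-operator commutator handled by the standard weak-convergence lemma (the paper's Lemma~\ref{Lemma 2.5}, which is precisely the div-curl/compensated-compactness result you invoke). One minor slip: weak $L^{2\g}$ convergence of $\tilde\vr_\e$ gives only \emph{weak} convergence of $\CalA[\tilde\vr_\e]$ in $W^{1,2\g}_{\rm loc}$, and the strong $L^q_{\rm loc}$ convergence you need then follows by compact Sobolev embedding---not strong $W^{1,2\g}_{\rm loc}$ convergence directly.
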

\begin{proof}[\bf Proof of Lemma \ref{Lemma 2.4}]
The main idea is to give proper test functions via taking advantage of Fourier multiplier and Riesz type operators. Referring to Section 1.3.7.2 in \cite{N-book} or Section 10.16 in \cite{F-N-book} for the definitions and properties used here of Fourier multiplier and Riesz operators,  we choose proper test functions defined by
\be\label{2.40}
\psi \nabla \Delta^{-1}(1_{\O}\tilde\vr_\e), \quad  \psi \nabla \Delta^{-1}(1_{\O}\vr),
\ee
where $\psi\in C_c^\infty(\O)$ and $\Delta^{-1}$ is the Fourier multiplier on $ R^2$ with symbol $-\frac{1}{|\xi|^2}$.

Observing that
$$
\nabla \nabla \Delta^{-1}=\left(\mathcal{R}_{i,j}\right)_{1\leq i,j\leq 2}
$$
are the classical Riesz operators, then for any $f\in L^q( R^2),~1<q<\infty$, we have
\be
\|\nabla\nabla \Delta^{-1}(f)\|_{L^q( R^2; R^{2\times2})}\leq C\, \|f\|_{L^q( R^2)}.\nn
\ee
By virtue of $(1_{\O}\tilde\vr_\e)\in L^{2\g}( R^2; R)\bigcap L^{1+}( R^2; R)$, owing to $2<2\g<\infty, 1<1+<\infty$, we have
\be
\|\nabla\nabla \Delta^{-1}(1_{\O}\tilde\vr_\e)\|_{L^{2\g}( R^2; R^{2\times2})}\leq C\, \|1_{\O}\tilde\vr_\e\|_{L^{2\g}(R^2; R)}\leq C.\nn
\ee
\be
\|\nabla\nabla \Delta^{-1}(1_{\O}\tilde\vr_\e)\|_{L^{1+}( R^2; R^{2\times2})}\leq C\, \|1_{\O}\tilde\vr_\e\|_{L^{1+}(R^2; R)}\leq C.\nn
\ee
That means $\nabla\nabla \Delta^{-1}(1_{\O}\tilde\vr_\e)\in L^{2\g}( R^2; R^{2\times2})\bigcap L^{1+}( R^2; R^{2\times2})$. Since $2\g>2+>d=2$, by the embedding theorem in homogeneous Sobolev spaces (see Theorem 1.55 and Theorem 1.57 in \cite{N-book} or Theorem 10.25 and Theorem 10.26 in \cite{F-N-book}), we have
\be\label{2.41}
\nabla \Delta^{-1}(1_{\O}\tilde\vr_\e)\in W^{1,2\g}( R^2; R^{2})\hookrightarrow L^{\infty}( R^2; R^{2})
\ee
which means
\be\label{2.42}
\|\nabla \Delta^{-1}(1_{\O}\tilde\vr_\e)\|_{L^{\infty}( R^2; R^{2})}\leq \|\nabla\nabla \Delta^{-1}(1_{\O}\tilde\vr_\e)\|_{L^{2\g}( R^2; R^{2\times2})}
\leq C\, \|1_{\O}\tilde\vr_\e\|_{L^{2\g}(R^2; R)}\leq C.
\ee
 Again by the embedding theorem in homogeneous Sobolev spaces, we have for any $f\in L^q( R^2), \ \supp\, f\subset \O$:
\ba\label{2.43}
&\|\nabla \Delta^{-1}(f)\|_{L^{q^*}( R^2; R^2)}\leq C\, \| f \|_{L^q( R^2)},\quad \frac{1}{q^*}=\frac{1}{q}-\frac{1}{2}, \ \mbox{if}\  1<q<2,\\
\ea
since $1_{\O}\tilde\vr_\e\in L^{1+}( R^2; R)\bigcap L^{2^\g}( R^2; R)$, by interpolation theorem between Lebesgue spaces, then we have $1_{\O}\tilde\vr_\e\in L^{p}( R^2; R),\ 1+<p<2\g$ and
\be
\|\nabla \Delta^{-1}(1_{\O}\tilde\vr_\e)\|_{L^{q^*}( R^2; R^{2})}\leq C\, \|1_{\O}\tilde\vr_\e\|_{L^{q}(R^2; R)}\leq C,\nn
\ee
where $\frac{1}{q^*}=\frac{1}{q}-\frac{1}{2}$, $1+<q<2$, $2+<q^*<\infty-$. Combined with \eqref{2.41}, we obtain
\be
\nabla \Delta^{-1}(1_{\O}\tilde\vr_\e)\in L^{\tilde{q}}( R^2; R^{2}), \quad 2+\leq\tilde{q}\leq\infty.\nn
\ee

Then by the uniform estimate for $\tilde \vr_\e$ and its weak limit $\vr$ in \eqref{2.22} and the fact $2\g>2+$ under our assumption $\g>1$,

\ba\label{2.44}
&\|\nabla \left(\psi \nabla \Delta^{-1}(1_{\O}\tilde\vr_\e)\right)\|_{L^{2\g}(\O; R^{2\times2})} + \|\nabla \left(\psi \nabla \Delta^{-1}(1_{\O}\vr)\right)\|_{L^{2\g}(\O; R^{2\times2})}\leq C.
\ea

Since $\de_1:=\frac{1}{2}\alpha-1>0$ in Lemma \ref{Lemma 2.3}, thus, \eqref{2.24} and \eqref{2.42} implies
\ba\label{est-test-flux4}
&|\langle r_\e,\psi \nabla \Delta^{-1}(1_{\O}\tilde\vr_\e)\rangle_{\mathcal{D}'(\O; R^2),\mathcal{D}(\O; R^2)}|\\
&\leq C\, \e^{\de_1} \left(\|\nabla \left(\psi \nabla \Delta^{-1}(1_{\O}\tilde\vr_\e)\right)\|_{L^{t}(\O; R^{2\times2})} + \|\psi \nabla \Delta^{-1}(1_{\O}\tilde\vr_\e)\|_{L^{\infty}(\O; R^2)}\right)\\
&\leq C\, \e^{\de_1} \left(\|\nabla \left(\psi \nabla \Delta^{-1}(1_{\O}\tilde\vr_\e)\right)\|_{L^{2\g}(\O; R^{2\times2})} + \|\psi \nabla \Delta^{-1}(1_{\O}\tilde\vr_\e)\|_{W^{1,2\g}(\O; R^2)}\right)\\
&\leq C\,\e^{\de_1},\ t\rightarrow 2{+},\nn
\ea
which goes to zero as $\e\to 0$.

\smallskip

Then we chose $\psi \nabla \Delta^{-1}(1_{\Omega}\tilde\vr_\e)$ as a test function in the weak formulation of equation \eqref{2.23} and pass $\e\to 0$. Moreover, we choose $\psi \nabla \Delta^{-1}(1_{\Omega}\vr)$ as a test function in the weak formulation of $\eqref{2.37}_2$. Comparing the results of theses two operations, through long and straightforward calculations, we finally get
\ba\label{2.45}
I:&=\lim_{\e\to 0}\intO{\psi \left(p(\tilde\vr_\e)-(\frac{4\mu}{3}+\eta)\Div \tilde\vu_\e\right)\tilde\vr_\e}-\intO{\psi\left(\overline{p(\vr)}-(\frac{4\mu}{3}+\eta)\Div \vu\right)\vr}\\
&=\lim_{\e\to 0}\intO{\tilde\vr_\e\tilde\vu_\e^i\tilde\vu_\e^j \psi \mathcal{R}_{i,j}(1_{\Omega} \tilde\vr_\e)}-\intO{\vr \vu^i\vu^j \psi \mathcal{R}_{i,j}(1_{\Omega} \vr)}.
\ea

In addition, choosing $1_{\Omega} \Div \Delta^{-1}(\psi\tilde\vr_\e \tilde\vu_\e)$ as a test function in the weak formulation of Lemma 2.1 with $b(\vr)=\vr$ and  $1_{\Omega} \Div  \Delta^{-1}(\psi\vr \vu)$ as a test function in the weak formulation of $\eqref{2.37}_1$ yields
\be\label{2.46}
\intO{1_{\Omega}\tilde\vr_\e \tilde\vu_\e^i  \mathcal{R}_{i,j}(\psi\tilde\vr_\e \tilde\vu_\e) }=0,\quad \intO{1_{\Omega}\vr \vu^i  \mathcal{R}_{i,j}(\psi \vr \vu)}=0.
\ee

Substituting (\ref{2.46}) into (\ref{2.45}) generates
\ba\label{2.47}
&I=\lim_{\e\to 0}\intO{\tilde\vu_\e^i\Big(\tilde\vr_\e\tilde\vu_\e^j \psi \mathcal{R}_{i,j}(1_{\Omega} \tilde\vr_\e)-1_{\Omega}\tilde\vr_\e   \mathcal{R}_{i,j}(\psi\tilde\vr_\e \tilde\vu_\e)\Big)}\\
&\qquad\qquad-\intO{ \vu^i\Big(\vr \vu^j \psi \mathcal{R}_{i,j}(1_{\Omega} \vr)-1_{\Omega}\vr  \mathcal{R}_{i,j}(\psi \vr \vu)\Big)}.
\ea

Now, we introduce the following Lemma.
\begin{lemma}\label{Lemma 2.5}Let $1<p,q<\infty$ satisfy $$\frac{1}{r}:=\frac{1}{p}+\frac{1}{q}<1.$$
Suppose
\[
u_\e \to u \quad\mbox{weakly in}\quad L^p( R^2),\quad v_\e \to v \quad\mbox{weakly in}\quad L^q( R^2),\ \mbox{as $\e \to 0$}.
\]
Then for any  $1\leq i,j\leq 2$:
\[
u_\e \mathcal{R}_{i,j}(v_\e)-v_\e \mathcal{R}_{i,j}(u_\e) \to u \mathcal{R}_{i,j}(v)-v \mathcal{R}_{i,j}(u) \quad\mbox{weakly in}\quad L^r( R^2).
\]
\end{lemma}

Critically, applying lemma \ref{Lemma 2.5} (refer to \cite[Lemma 3.4]{FNP} for the proof) to \eqref{2.47} to obtain $I\rightarrow 0 (as \ \e\rightarrow0)$ in \eqref{2.45}, then the convergence result \eqref{2.39} can be deduced.

\end{proof}

\medskip

A direct consequence of the compactness of the effective viscous flux is as follows.
\begin{lemma}\label{Lemma 2.6} We denote $\overline {p(\vr)\vr}$ as the weak limit of $p(\tilde \vr_\e)\tilde \vr_\e$ in $L^{\frac{2\g}{\g+1}}(\O)$. Then we have $\overline {p(\vr)\vr}=\overline {p(\vr)}\vr$.

\end{lemma}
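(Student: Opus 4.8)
The plan is to derive Lemma \ref{Lemma 2.6} as a direct consequence of the effective viscous flux identity \eqref{2.39} combined with the monotonicity of the pressure $p(\vr) = a\vr^\gamma$. The key structural point is that the viscous part of the effective flux behaves well under weak limits because $\dive\tilde\vu_\e \to \dive\vu$ weakly in $L^2(\O)$ while $\tilde\vr_\e\to\vr$ weakly in $L^{2\g}(\O)$, and these two factors live in dual-type spaces only after we localize; so the first step is to argue that no concentration occurs in the product $\dive\tilde\vu_\e\,\tilde\vr_\e$. More precisely, I would first record from \eqref{2.38} that $[\vr,\vu]$ is a renormalized solution, so testing the renormalized equation for $\vr$ (with $b(s)=s\log s$ or a truncated variant) against $\psi\in C_c^\infty(\O)$ yields an identity controlling $\intO{\psi\,\vr\dive\vu}$; similarly Lemma \ref{Lemma 2.1} gives the corresponding identity for $\tilde\vr_\e$. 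Passing to the limit in the $\e$-version and comparing shows that $\overline{\vr\dive\vu}=\vr\dive\vu$ where $\overline{\vr\dive\vu}$ is the weak $L^1$-limit of $\tilde\vr_\e\dive\tilde\vu_\e$ — this is the standard step establishing that the defect measure $\overline{\vr\log\vr}-\vr\log\vr$ is nonincreasing along the flow and in fact vanishes in the stationary case.

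Granting that, the second step is purely algebraic: rearranging \eqref{2.39} gives
\be
\intO{\psi\,\overline{p(\vr)\vr}} - \intO{\psi\,\overline{p(\vr)}\,\vr}
= \left(\tfrac{4\mu}{3}+\eta\right)\left(\intO{\psi\,\overline{\vr\dive\vu}} - \intO{\psi\,\vr\dive\vu}\right) = 0
\ee
for all $\psi\in C_c^\infty(\O)$, where $\overline{p(\vr)\vr}$ is the weak $L^{2\g/(\g+1)}$-limit of $p(\tilde\vr_\e)\tilde\vr_\e$. Since $\psi$ is arbitrary this forces $\overline{p(\vr)\vr}=\overline{p(\vr)}\,\vr$ a.e. in $\O$, which is exactly the claim. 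I would be slightly careful to note that all the weak limits in question exist along a common subsequence by the uniform bound \eqref{2.22} (so $p(\tilde\vr_\e)=a\tilde\vr_\e^\gamma$ is bounded in $L^2(\O)$ and $p(\tilde\vr_\e)\tilde\vr_\e$ is bounded in $L^{2\g/(\g+1)}(\O)$, both reflexive), and that the product $\psi\dive\tilde\vu_\e$ with the $L^{2\g}$-bounded sequence $\tilde\vr_\e$ is equiintegrable so its weak limit is well-defined.

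The main obstacle is the first step — showing $\overline{\vr\dive\vu}=\vr\dive\vu$, i.e. that the renormalized continuity equation passes to the limit without a defect. The cleanest route is to exploit that both $[\tilde\vr_\e,\tilde\vu_\e]$ (via Lemma \ref{Lemma 2.1}) and $[\vr,\vu]$ (via \eqref{2.38}) satisfy renormalized equations on all of $R^2$ for $b(s)=s\log s$ (which satisfies \eqref{1.18}–\eqref{1.19} for $\g>1$ after the usual truncation-and-limit argument since $\lambda_1$ can be taken as $0\le \tfrac{\beta(\gamma)}{2}-1=\gamma-1$); integrating each over $R^2$ against the constant test function (justified by compact support of $\tilde\vr_\e$, $\vr$) gives $\intO{\tilde\vr_\e\dive\tilde\vu_\e}=0$ and $\intO{\vr\dive\vu}=0$, hence $\intO{\overline{\vr\dive\vu}}=0=\intO{\vr\dive\vu}$, but this is only the integral identity. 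To upgrade to the pointwise/localized identity one tests instead against $\psi\in C_c^\infty(\O)$ and uses that $b(s)-sb'(s)=-s$ for $b(s)=s\log s$, together with the strong $L^q$ convergence of $\tilde\vu_\e$ from \eqref{2.35} and lower semicontinuity of convex functionals (the map $s\mapsto s\log s$ is convex, so $\overline{\vr\log\vr}\ge\vr\log\vr$), to conclude the two localized balances agree and the defect vanishes. I expect the paper handles this via the standard Feireisl–Lions argument, so in the write-up I would cite Lemma \ref{Lemma 2.4} and the renormalized equations and present the algebraic rearrangement as the substance of the proof, flagging the effective viscous flux lemma as the real engine.
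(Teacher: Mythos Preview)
Your argument has a genuine gap in what you call ``step 1'': you cannot establish $\overline{\vr\,\dive\vu}=\vr\,\dive\vu$ pointwise \emph{before} knowing the pressure identity. Comparing the renormalized equations at level $\e$ and in the limit (with $b(s)=s\log s$) yields only the relation
\[
\dive\bigl((\overline{\vr\log\vr}-\vr\log\vr)\,\vu\bigr)+\bigl(\overline{\vr\,\dive\vu}-\vr\,\dive\vu\bigr)=0\quad\text{in }\mathcal{D}'(R^2),
\]
which is one equation for two unknown defects. Convexity of $s\mapsto s\log s$ gives $\overline{\vr\log\vr}\ge\vr\log\vr$, but this inequality plugged into the relation above does \emph{not} force $\overline{\vr\,\dive\vu}=\vr\,\dive\vu$ pointwise: you get no sign information on the divergence-defect term for a general localizing $\psi$. (The sentence about the defect being ``nonincreasing along the flow'' is borrowed from the evolutionary theory, where a Gronwall argument from $t=0$ closes things; there is no analogue here.) In fact, $\overline{\vr\,\dive\vu}=\vr\,\dive\vu$ a.e.\ is essentially equivalent to strong convergence of $\tilde\vr_\e$, which is what the whole machinery is designed to prove.

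The paper's argument runs in the opposite direction and uses an ingredient you never invoke: the \emph{monotonicity of the pressure}. From the renormalized equations one rewrites $\int_\O\psi\,\tilde\vr_\e\dive\tilde\vu_\e$ and $\int_\O\psi\,\vr\,\dive\vu$ as flux terms $\int_\O(\cdot)\vu\cdot\nabla\psi$; substituting into \eqref{2.39} and letting $\psi=\psi_n\nearrow 1_\O$ (with the $\nabla\psi_n$ terms controlled via Hardy's inequality) yields only the \emph{integral} identity $\int_\O\overline{p(\vr)\vr}=\int_\O\overline{p(\vr)}\,\vr$. The pointwise conclusion then comes from the a.e.\ inequality $\overline{p(\vr)\vr}\ge\overline{p(\vr)}\,\vr$, which follows from monotonicity of $\vr\mapsto p(\vr)$ (Theorem~10.19 in \cite{F-N-book}). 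So the logical order is: integral vanishing of the divergence-defect $\Rightarrow$ integral pressure identity $\Rightarrow$ pointwise pressure identity via monotonicity. Your scheme attempts pointwise divergence-defect $\Rightarrow$ pointwise pressure identity, and the first implication is where the proof breaks.
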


\begin{proof}[\bf Proof of Lemma \ref{Lemma 2.6}]
In the beginning, we have
$$ 2\g-(\g+1)=\g -1 >0 . $$
Then by \eqref{2.21}, we obtain
$$
p(\tilde \vr_\e)\tilde \vr_\e \to \overline {p(\vr)\vr} \quad \mbox{weakly in}\quad L^{\frac{2\g}{\g+1}}(\O).
$$

Taking $b(s)=s\log s$ in the renormalized equations in Lemma 2.1 and \eqref{2.38} yields
\be\label{2.48}
\dive\big((\tilde\vr_\e\log \tilde\vr_\e) \tilde\vu_\e\big)+\tilde\vr_\e \dive \tilde\vu_\e=0,\quad \dive\big((\vr\log \vr) \vu\big)+\vr \dive \vu=0,\ \mbox{in}\ \mathcal{D}'(\O).
\ee
Passing $\e\to 0$ in the first equation of \eqref{2.48} gives
\be\label{2.49}
\dive\big(\overline{(\vr\log \vr)}\, \vu\big)+\overline{\vr \dive \vu}=0, \ \mbox{in}\ \mathcal{D}'(\O),
\ee
where we used the strong convergence of the velocity in \eqref{2.35} and
\ba\label{2.50}
&\tilde\vr_\e\log \tilde\vr_\e \to \overline{\vr\log \vr} \quad \mbox{weakly in}\quad L^{q}(\O)\ \mbox{for any $q<2\g$},\\
&\tilde\vr_\e \dive \tilde\vu_\e \to \overline{\vr \dive \vu} \quad \mbox{weakly in}\quad L^{\frac{2\g}{\g+1}}(\O).
\ea

Then for any $\psi\in C_c^\infty(\Omega)$, \eqref{2.49} and \eqref{2.50} implies
\be\label{2.51}
\eqref{2.39}_{left}=\lim_{\e\to 0}\intO{\psi \left(p(\tilde\vr_\e)-(\frac{4\mu}{3}+\eta)\dive \tilde\vu_\e\right)\tilde\vr_\e}=\intO{\psi \overline {p(\vr)\vr} -(\frac{4\mu}{3}+\eta) \overline{(\vr\log \vr)}\, \vu \cdot \nabla \psi}.
\ee

Utilizing the second equation in \eqref{2.48}, we obtain
\be\label{2.52}
\eqref{2.39}_{right}=\intO{\psi\left(\overline{p(\vr)}-(\frac{4\mu}{3}+\eta)\dive \vu\right)\vr}=\intO{\psi \overline{p(\vr)}\vr -(\frac{4\mu}{3}+\eta) (\vr\log \vr)\, \vu \cdot \nabla \psi}.
\ee

Assume the test functions $\{\psi_n\}_{n\in \Z_+} \subset C_c^\infty(\Omega)$ such that
$$
 \psi_n(x)=0 \ \mbox{if} \ d(x,\partial \Omega)<\frac{1}{n},\quad \psi_n(x)=1 \ \mbox{if} \ d(x,\partial \O)>\frac{2}{n},\quad \|\nabla  \psi_n\|_{L^\infty(\O; R^2)}\leq 2n.
$$
Then for any $q\in [1,\infty]$:
\[
\|1-\psi_n\|_{L^q(\O)}\leq C\, n^{-\frac{1}{q}},\quad \|\nabla \psi_n\|_{L^q(\O; R^2)}\leq C\, n^{1-\frac{1}{q}},
\]
and consequently
\[
\|d(x,\d \O)\nabla \psi_n\|_{L^q(\O; R^2)}\leq C\, n^{-\frac{1}{q}}.
\]

Here, by Hardy's inequality, the velocity $\vu\in W_0^{1,2}(\O; R^2)$ implies
\[
\|[d(x,\partial \O)]^{-1} \vu\|_{L^2(\O; R^2)}\leq C\|\vu\|_{W^{1,2}(\O; R^2)}\leq C.
\]
Therefore,
\ba\label{2.53}
&\intO{\nabla \psi_n \cdot \overline{(\vr\log\vr)}\vu} \\
&\leq \|d(x,\partial \O)\nabla \psi_n\|_{L^{10}(\O; R^2)}\|\overline{(\vr\log\vr)}\|_{L^{5/2}(\O)}\|[d(x,\partial \Omega)]^{-1}\vu\|_{L^2(\O; R^2)}\\
&\leq C\ n^{-1/10}.
\ea
Similarly,
\be\label{2.54}
\intO{\nabla \psi_n \cdot (\vr\log\vr)\vu} \leq C \ n^{-1/10}.
\ee

Take $\psi=\psi_n$ in \eqref{2.39} and pass to the limit $n\to \infty$. By using \eqref{2.51}-\eqref{2.54}, we deduce
\be\label{2.55}
\intO{\overline{p(\vr)\vr}-\overline{p(\vr)}\vr}=0.
\ee

By the strict monotonicity of the mapping $\vr \mapsto p(\vr)$, applying Theorem 10.19 in \cite{F-N-book} or Lemma 3.35 in \cite{N-book} implies
\[
\overline{p(\vr)\vr} \geq \overline{p(\vr)}\vr,\quad \mbox{a.e. in}\quad \O.
\]
Together with \eqref{2.55}, we deduce
\[
\overline{p(\vr)\vr} = \overline{p(\vr)}\vr,\quad \mbox{a.e. in}\quad \O.
\]

Thus we complete the proof of Lemma \ref{Lemma 2.6}.

\end{proof}

By virtue of the monotonicity of $p(\cdot)$, and using Theorem 10.19 in \cite{F-N-book} again, we obtain $\overline{p(\vr)}=p(\vr)$. Hence, we finish the proof of Theorem \ref{Theorem 1.6}.

For convenience, we recall Theorem 10.19 in \cite{F-N-book}: Let $I\subset R$ be an interval, $Q\subset R^d$ be a domain, $P$ and $G$ be non-decreasing functions in $C(I)$. Let $\{\vr_n\}_{n\in \N}$ be a sequence in $L^1(Q;I)$ such that
$$
P(\vr_n) \to \overline{P(\vr)}, \quad G(\vr_n) \to \overline{G(\vr)}, \quad P(\vr_n)G(\vr_n) \to \overline{P(\vr)G(\vr)}, \quad \mbox{weakly in $L^1(Q)$}.
$$
Then the following properties hold:
\begin{itemize}

\item[(i).] $\overline{P(\vr)}\ \overline{G(\vr)}\leq \overline{P(\vr)G(\vr)}.$
\item[(ii).] If $P\in C(R), \ G\in C(R),\ G(R)=R$, $G$ is strictly increasing, and $\overline{P(\vr)}\ \overline{G(\vr)} = \overline{P(\vr)G(\vr)}$, then $\overline{P(\vr)}=P\circ G^{-1} \overline{G(\vr)} $.  If, in particular, $G(z)=z$ be the identity function, there holds $\overline{P(\vr)}=P(\vr)$.

\end{itemize}

{\bf Acknowledgement}: The research of \v{S}.N.  leading to these results has received funding from the Czech Sciences Foundation (GA\v CR), GA19-04243S and in the framework of RVO: 67985840.  The authors are grateful to Yong Lu for fruitful discussions.




\begin{thebibliography}{000}


\bibitem{ADM} G. Acosta, R. G. Dur\'an, M. A. Muschietti. Solutions of the divergence
operator on John domains.  Adv. Math. 206 (2006) 373-401.

 \bibitem{ALL-3}  G. Allaire, Homogenization of the Stokes flow in a connected porous
medium. Asymptotic Anal. 2 (1989) 203-222.

\bibitem{ALL-NS1} G. Allaire. \newblock Homogenization of the Navier-Stokes equations in open sets perforated with tiny holes. I. Abstract framework, a volume
distribution of holes. \newblock { Arch. Ration. Mech. Anal.}, {113} (1990) 209-259.


\bibitem{ALL-NS2} G. Allaire. \newblock Homogenization of the Navier-Stokes equations in open sets perforated with tiny holes. II. Noncritical sizes of the holes for a volume distribution and a surface distribution of holes. \newblock { Arch. Ration. Mech. Anal.} {113} (1990) 261-298.


\bibitem {BN}  M. Bravin, \v S. Ne\v casov\' a. \newblock On the vanishing rigid body problem in a viscous compressible fluid.
\newblock arXiv:2011.05040


\bibitem{bog} M.E. Bogovskii. Solution of some vector analysis problems connected
with operators div and grad. Trudy Sem. S.L. Soboleva 80 (1980) 5-40. In
Russian.

\bibitem{BREZNOV}
J.~B{\v{r}}ezina, A.~Novotn{\'y}.
 On weak solutions of steady {N}avier-{S}tokes equations for monatomic
  gas.
  Comment. Math. Univ. Carolin. {49} (2008) 611-632.

\bibitem{DRS} L. Diening, M. R{\r{u}}\v{z}i\v{c}ka, K. Schumacher. A decomposition technique for John domains. Ann. Acad. Sci. Fenn.  35 (2010) 87-114.




\bibitem{DiP-L} R.J. DiPerna, P.-L. Lions, \newblock Ordinary differential equations, transport theory and Sobolev
spaces. \newblock { Invent. Math.} { 98} (1989) 511-547.

\bibitem{DFL} L. Diening, E. Feireisl, Y. Lu. The inverse of the divergence operator on perforated domains with applications to homogenization problems for the compressible Navier-Stokes system.  ESAIM: Control Optim. Calc. Var., 23 (2017), 851-868.

\bibitem{FL1}E. Feireisl, Y. Lu. Homogenization of stationary Navier-Stokes equations in domains with tiny holes. J. Math. Fluid Mech. 17 (2015) 381-392.

\bibitem{F-N-book} E. Feireisl and A. Novotn\'y. Singular Limits in Thermodynamics of Viscous Fluids. Birkh\"auser Verlag, Basel, 2009.


\bibitem{FNP}
E.~Feireisl, A.~Novotn{\' y}, H.~Petzeltov{\' a}.
\newblock On the existence of globally defined weak solutions to the
  {N}avier-{S}tokes equations of compressible isentropic fluids.
\newblock { J. Math. Fluid Mech.} {3} (2001) 358--392.



\bibitem{FeNaNe} E.~Feireisl, Y.~Namlyeyeva, {\v S.}~Ne{\v c}asov{\' a}.
\newblock Homogenization of the evolutionary {N}avier--{S}tokes system.
\newblock Manusc. Math. 149 (2016) 251-274.


\bibitem{FNT-Hom} E. Feireisl, A. Novotn\'y,  T. Takahashi. \newblock Homogenization and singular limits for the complete Navier-Stokes-Fourier system.
\newblock {J. Math. Pures Appl.}  {94} (2010) 33-57.

\bibitem{FSW}J. Frehse,  M. Steinhauer,  W. Weigant. The Dirichlet problem for steady viscous compressible flow in three dimensions. Journal de math\'ematiques pures et appliqu\'ees 97 (2012) 85-97.

\bibitem{Galdi} G.P. Galdi. An Introduction to the Mathematical Theory of the Navier-Stokes Equations: Steady-State Problems. Springer Science and Business Media, 2011.

 \bibitem{PK} L. V. Kapitanskii, K. I. Piletskas.
\newblock Some problems of vector analysis. (Russian) Boundary value problems of mathematical physics and related problems in the theory of functions, 
\newblock Zap. Nauchn. Sem. Leningrad. Otdel. Mat. Inst. Steklov. (LOMI) 16,  138 (1984), 65-85.

\bibitem{Lions-C}
P.-L. Lions.
\newblock { Mathematical topics in fluid dynamics, Vol.2, Compressible
  models}.
\newblock Oxford Science Publication, Oxford, 1998.

\bibitem{Lu-Schwarz18} Y. Lu, S.  Schwarzacher. Homogenization of the compressible Navier-Stokes equations in domains with very tiny holes. J. Differential Equations,  265(4) (2018), 1371-1406.
 
 \bibitem{Lu-Pokorny} Y. Lu, M. Pokorn\' y. Homogenization of stationary Navier–Stokes–Fourier system in domains with tiny holes. J. Differential Equations,  278(5) (2021), 463-492.
 
\bibitem{Mas-Hom} N. Masmoudi. \newblock Homogenization of the compressible Navier-Stokes equations in a porous medium.
\newblock {ESAIM Control Optim. Calc. Var.} { 8} (2002) 885-906.

\bibitem{Mik} A. Mikeli\'{c}. \newblock {Homogenization of nonstationary Navier-Stokes equations in a domain with a grained boundary}.
  \newblock {Ann. Mat. Pura Appl.} { 158} (1991) 167-179.


\bibitem{N-book}
A. Novotn\'y, I. Stra\u{s}kraba.
\newblock { Introduction to the mathematical theory of compressible flow}.
\newblock Oxford University Press, Oxford, 2004.

\bibitem{PloSokbook}
P.~Plotnikov,  J.~Sokolowski.
\newblock {\em Compressible {N}avier-{S}tokes equations}, volume~73 of {\em
  Instytut Matematyczny Polskiej Akademii Nauk. Monografie Matematyczne (New
  Series) [Mathematics Institute of the Polish Academy of Sciences.
  Mathematical Monographs (New Series)]}.
\newblock Birkh\"auser/Springer Basel AG, Basel, 2012.
\newblock Theory and shape optimization.

\bibitem{PW}P. Plotnikov, W. Weigant. Steady 3D viscous compressible flows with adiabatic exponent $\g\in (1,\infty)$. J. Math. Pures Appl. 104 (2015) 58-82.

\bibitem{book-hom} E. S\'anchez-Palencia. Non homogeneous media and vibration theory, Lecture Notes in Physics 127, Springer-Verlag (1980).

\bibitem{Tartar1} L. Tartar. Incompressible fluid flow in a porous medium: convergence of the homogenization process, in {Nonhomogeneous
media and vibration theory}, edited by E. S\'anchez-Palencia, 1980, 368-377.



 \end{thebibliography}
\end{document}